\numberwithin{equation}{section}
\def\epsilon{\varepsilon}
\def\euler{\chi}
\def\Pf{{\rm{Pf}}}
\def\parti#1#2{\frac{\partial #1 } {\partial #2} }
\def\upto{\nearrow}
\def\Sc{{\rm R}}
\def\Ricci{{\rm{Ricci}}}
\def\Rc{{\rm{Rc}}}
\def\ti{\tilde}
\def\beq{ \begin{equation}  \begin{split} }
\def\eeq{ \end{split}  \end{equation} }
\def\lap{\Delta}
\def\partt{\frac{\partial }{\partial t} }
\def\phi{\varphi}
\def\R{\mathbb R}
\def\M^n#1#2{\mathcal M^{#1}\left(#2\right)}
\def\ddt{\frac{d}{d t}}
\def\dds{\frac{d}{d s}}
\def\part{\partial}
\def\grad{\nabla}
\DeclareMathOperator{\vol}{vol}
\DeclareMathOperator{\Riem}{Riem}
\DeclareMathOperator{\Weil}{Weil}
\DeclareMathOperator{\Rm}{Rm}
\newtheorem{theo}{Theorem}[section]
\newtheorem{lemma}[theo]{Lemma}
\newtheorem{coro}[theo]{Corollary}
\theoremstyle{definition}
\theoremstyle{remark}
\newtheorem{remark}[theo]{Remark}
\begin{document}
\title[Ricci flow integral curvature estimates]{Some integral curvature estimates for the Ricci flow in four
  dimensions}
\thanks{We gratefully acknowledge the support of SFB TR71 of the DFG (German Research Foundation) and 
the University of Freiburg, where a part of this work was carried out.}

\author{Miles Simon}
\address{Miles Simon: Otto von Guericke University, Magdeburg, IAN,
  Universit\"atsplatz 2, Magdeburg 39104, Germany}
\curraddr{}
\email{miles point simon at  ovgu point de}

\subjclass[2000]{53C44}
\date{\today}

\dedicatory{}

\keywords{Ricci flow, scalar curvature}

\begin{abstract}
We consider solutions $(M^4,g(t)),  0 \leq t <T$,  to Ricci flow on
compact, four dimensional manifolds without boundary.
We prove integral curvature estimates which are valid for any such solution.
In the case that the scalar curvature is bounded and $T< \infty$, we show that
these estimates imply that the (spatial) integral of the square of the norm of the Riemannian curvature is bounded by
a constant independent of time $t$ for all $0 \leq t<T$ and that the space time
integral over $M \times [0,T)$ of the fourth power of the norm of the Ricci curvature is
bounded.
\end{abstract}

\maketitle
\section{Introduction}

We consider arbitrary smooth solutions to
Ricci flow, $\partt g(t) = -2\Rc(g(t))$ for all $t \in [0,T)$, on
closed, four manifolds without boundary. We assume that the scalar curvature
satisfies $\Sc(\cdot,0) >-1$ at time zero. If this is not the case, then we can always
scale the solution by a constant to obtain  a new solution satisfying
this inequality. The Ricci flow was first introduced and studied by R. Hamilton
in  \cite{HaThree}. We show that the following (and
other) integral estimates hold.
\begin{theo}\label{intestgen}
Let $(M^4,g(t))_{t\in [0,T)}$ be a smooth solution to Ricci flow  on a
compact four dimensional manifold $M^4$ without boundary and assume
that the scalar curvature satisfies $\inf_M
\Sc(\cdot,0) > -1$ at time zero.  Then

\begin{eqnarray}
&& \int_M \frac{|\Rc|^2(\cdot,S)}{(\Sc(\cdot,S) +2)} d\mu_{g(S)}  +
 \int_0^S  \int_M  \frac{|\Rc|^4(\cdot,t)}{(\Sc(\cdot,t) +2)^2} d\mu_{g(t)} dt  \cr 
&& \ \ \ \ \leq 
2^2\pi^2 \chi(e^{64S} - 1) + e^{64S}\int_M
\frac{|\Rc|^2(\cdot,0)}{(\Sc(\cdot,0) +2)} d\mu_{g(0)} \cr
&& \ \ \ \ \ \
+2^{10}e^{64S}\int_0^S \int_M \Sc^2(\cdot,t) d\mu_{g(t)} dt   \cr
&& =: c_0(M,g(0),S) + 2^{10}e^{64S}\int_0^S \int_M \Sc^2(\cdot,t) d\mu_{g(t)} dt \label{generalintro1}
 \end{eqnarray}
and
\begin{eqnarray}
 &&\int_M |\Rc|(\cdot,S) d\mu_{g(S)} \cr
&& \ \ \ \ \leq  \vol(M,g(S)) + 2c_0(M,g(0),S)  + 2^{11}  e^{64S}\int_0^S \int_M \Sc^2(\cdot,t) d\mu_{g(t)} dt  \label{generalintro2}
\end{eqnarray}
and
\begin{eqnarray}
&&\int_0^S \int_M |\Rc|^2  d\mu_{g(t)} dt \cr
&&\ \ \ \ \leq \int_0^S \vol(M,g(t)) dt
+2^3c_0(M,g(0),S)
+ 2^{13} e^{64S}\int_0^S \int_M \Sc^2(\cdot,t) d\mu_{g(t)} dt
 \label{generalintro3}
\end{eqnarray}
and
\begin{eqnarray}
&&\int_0^S \int_M |\Rm|^2  d\mu_{g(t)} dt \cr 
&& \ \  \
\leq 4\int_0^S \vol(M,g(t)) dt 
+ 2^5( c_0(M,g(0),S) + \pi^2 \chi S) \cr
&& \ \ \ \ +2^{15} e^{64S}\int_0^S \int_M \Sc^2(\cdot,t) d\mu_{g(t)} dt 
 \label{generalintro4} 
\end{eqnarray}
for all $0\leq S <T$, where $\chi = \chi(M)$ is the
Euler-characteristic of $M$, and 
\begin{eqnarray}
&& c_0(M,g(0),S):= 2^2\pi^2 \chi(e^{64S} - 1) + e^{64S}\int_M
\frac{|\Rc|^2(\cdot,0)}{(\Sc(\cdot,0) +2)} d\mu_{g(0)} 
\end{eqnarray}
is defined in the statement above.
\end{theo}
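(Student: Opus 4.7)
The proof begins by differentiating the quantity $F(t) := \int_M \frac{|\Rc|^2}{\Sc+2}\, d\mu_{g(t)}$ in time. Before doing so, one applies the scalar curvature maximum principle to $\partial_t \Sc = \Delta \Sc + 2|\Rc|^2 \geq \Delta \Sc + \tfrac{1}{2}\Sc^2$ (using $|\Rc|^2 \geq \Sc^2/4$ in dimension four), comparing with the ODE $\dot y = \tfrac{1}{2}y^2$ with $y(0)=-1$: this propagates the hypothesis $\Sc(\cdot,0) > -1$ forward, giving $\Sc(\cdot,t) \geq -2/(t+2) > -1$ and in particular $\Sc + 2 > 1$ throughout $[0,T)$, so the integrand is smooth. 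Combining $\partial_t|\Rc|^2 = \Delta|\Rc|^2 - 2|\nabla\Rc|^2 + \Rm \ast \Rc \ast \Rc$, $\partial_t \Sc = \Delta\Sc + 2|\Rc|^2$, and $\partial_t d\mu = -\Sc\, d\mu$, computing $\partial_t(|\Rc|^2/(\Sc+2))$ and integrating over $M$ (the Laplacian of the quotient integrates to zero by Stokes), one arrives at an evolution identity for $F(t)$. The decisive structural observation is that the reaction term $2|\Rc|^2$ inside $\partial_t \Sc$, when multiplied by $-|\Rc|^2/(\Sc+2)^2$ (the contribution of $\partial_t(\Sc+2)^{-1}$), produces the strongly coercive term $-\int_M 2|\Rc|^4/(\Sc+2)^2\, d\mu$, which after rearrangement is exactly the space--time term on the left-hand side of (\ref{generalintro1}).

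What remains is to dominate the error terms. Cross gradient terms of the form $\nabla|\Rc|^2 \cdot \nabla\Sc / (\Sc+2)^2$ are handled by Cauchy--Schwarz together with Kato's inequality $|\nabla|\Rc|| \leq |\nabla\Rc|$ and the elementary estimate $|\nabla\Sc| \leq 2|\nabla\Rc|$ valid in dimension four, which allows them to be absorbed into the naturally favourable term $-2\int_M |\nabla\Rc|^2/(\Sc+2)\, d\mu$. The algebraic curvature term $\Rm\ast\Rc\ast\Rc/(\Sc+2)$ is pointwise bounded by $|\Rm|\,|\Rc|^2/(\Sc+2)$, and Young's inequality then gives $|\Rm|\,|\Rc|^2/(\Sc+2) \leq \eta|\Rm|^2 + C\eta^{-1} |\Rc|^4/(\Sc+2)^2$; choosing $\eta$ small enough absorbs the second summand into the coercive term, leaving $\eta \int_M |\Rm|^2\, d\mu$ to be controlled. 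This is where the four-dimensional Gauss--Bonnet--Chern identity
$$8\pi^2 \chi(M) = \int_M\bigl( |W|^2 + \tfrac{1}{24} \Sc^2 - \tfrac{1}{2}|\mathring{\Rc}|^2 \bigr)\, d\mu,$$
combined with the orthogonal decomposition $|\Rm|^2 = |W|^2 + 2|\mathring{\Rc}|^2 + \tfrac{1}{6}\Sc^2$, enters; together they give $\int_M |\Rm|^2\, d\mu \leq 8\pi^2\chi + \tfrac{5}{2}\int_M |\Rc|^2\, d\mu$. The residual $|\Rc|^2$ contributions (including the one generated by $\partial_t d\mu$) are split as $|\Rc|^2 = \frac{|\Rc|^2}{\Sc+2}(\Sc+2)$ and bounded by $CF(t) + C\int_M \Sc^2\, d\mu + C\vol$ using the interpolation $|\Rc|^2 \leq \epsilon |\Rc|^4/(\Sc+2)^2 + \tfrac{1}{4\epsilon}(\Sc+2)^2$ together with $(\Sc+2)^2 \leq 2\Sc^2 + 8$. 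Assembling everything, one obtains a Grönwall inequality of the shape
$$F'(t) + \int_M \frac{|\Rc|^4}{(\Sc+2)^2}\, d\mu \leq 64\, F(t) + 2^{10}\int_M \Sc^2\, d\mu + C\pi^2 \chi,$$
whose integration immediately yields (\ref{generalintro1}) with the stated constants.

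The remaining estimates follow from (\ref{generalintro1}) by purely algebraic manipulations. For (\ref{generalintro2}), the pointwise inequality $|\Rc| \leq \tfrac{1}{2}|\Rc|^2/(\Sc+2) + \tfrac{1}{2}(\Sc+2)$ integrates to an $L^1$ bound on $|\Rc|$ in terms of $F(S)$, $\int_M \Sc\, d\mu$, and $\vol(M,g(S))$; Young's inequality together with $\Sc \geq -1$ controls the $\int \Sc$ contribution by $\int \Sc^2 + \vol$, and substitution into (\ref{generalintro1}) gives the result. For (\ref{generalintro3}), the $L^2$ analogue $|\Rc|^2 \leq \tfrac{1}{2} |\Rc|^4/(\Sc+2)^2 + \tfrac{1}{2}(\Sc+2)^2$ together with $(\Sc+2)^2 \leq 2\Sc^2 + 8$ transfers the space--time $L^4$ bound provided by (\ref{generalintro1}) to a space--time $L^2$ bound on $|\Rc|$. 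Finally, (\ref{generalintro4}) follows from (\ref{generalintro3}) by re-applying the Gauss--Bonnet--Chern bound $\int_M |\Rm|^2 \leq 8\pi^2\chi + 4\int_M |\Rc|^2$ pointwise in time and integrating. I expect the main obstacle to be the differential inequality in the second paragraph: tracking all signs through the integration by parts, choosing $\eta$ so that every absorption closes, and identifying the exact constants $64$ and $2^{10}$ all require a careful (though essentially routine) technical calculation; the derivations of the last three estimates are then largely algebraic.
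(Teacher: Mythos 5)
Your overall strategy is the same as the paper's: derive a differential inequality for $F(t)=\int_M |\Rc|^2/(\Sc+2)\,d\mu$, use Young's inequality plus the four--dimensional Gauss--Bonnet--Chern identity to trade the $\Rm(\Rc,\Rc)$ term for $\chi$, $\Sc^2$ and a multiple of $F$, integrate by Gr\"onwall, and then deduce the last three estimates by pointwise Young inequalities and Gauss--Bonnet. The paper does exactly this, except that it imports the evolution equation for $f=|\Rc|^2/(\Sc+c)$ from Cao's pinching paper rather than recomputing it, and it does not need your refined ODE comparison $\Sc\geq -2/(t+2)$ (the bare preservation of $\Sc\geq -1$ suffices).

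One step of your argument, as written, does not close. In the evolution of $f=u/v$ with $u=|\Rc|^2$, $v=\Sc+2$, the gradient contributions are
\begin{equation*}
-\frac{2|\nabla \Rc|^2}{v}+\frac{2\,\nabla|\Rc|^2\cdot\nabla \Sc}{v^2}-\frac{2|\Rc|^2|\nabla \Sc|^2}{v^3},
\end{equation*}
and you propose to absorb the middle (cross) term into the first term alone via Cauchy--Schwarz, Kato, and $|\nabla \Sc|\leq 2|\nabla \Rc|$. That bounds the cross term by $8|\Rc|\,|\nabla\Rc|^2/v^2$, and absorbing this into $2|\nabla\Rc|^2/v$ would require $4|\Rc|\leq \Sc+2$ pointwise, which is false in general. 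The absorption only works if you also use the third (negative) term: a weighted Cauchy--Schwarz gives $\tfrac{2}{v^2}\nabla|\Rc|^2\cdot\nabla\Sc\leq \tfrac{2}{v}|\nabla|\Rc||^2+\tfrac{2}{v^3}|\Rc|^2|\nabla\Sc|^2$, after which Kato finishes the job; equivalently, the three terms combine exactly into the perfect square $-2|Z|^2/(\Sc+2)^3\leq 0$ with $Z=(\Sc+2)\nabla\Rc-\Rc\otimes\nabla\Sc$, which is how the paper (following Cao) disposes of them. This is a local, fixable slip rather than a structural one. Beyond that, be aware that the theorem's content is the explicit constants ($64$, $2^{10}$, $2^2\pi^2\chi$, etc.), which you leave unverified; they come from the specific Young splittings $4\Rm(\Rc,\Rc)/(\Sc+2)\leq \tfrac{f^2}{2}+8|\Rm|^2$, $32|\Rc|^2\leq \tfrac{f^2}{4}+2^{10}\Sc^2+64f$, $-f\Sc\leq\tfrac{f^2}{4}+\Sc^2$, which together consume exactly $f^2$ of the available $-2f^2$, and from the normalisation $\int_M(|\Rm|^2-4|\Rc|^2+\Sc^2)\,d\mu=32\pi^2\chi$ (your Weyl-norm convention differs by a factor of $4$ in the $|W|^2$ coefficient, so your constants would come out differently).
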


In the case that the scalar curvature is positive everywhere,
a minor modification of the proof of these estimates leads to
similar estimates which don't contain volume terms: see Theorem \ref{posscalar}.

In the remainder of the paper we consider the special case that $T< \infty$
and that the solution has bounded scalar curvature, $\sup_{M^4\times
  [0,T)} |\Sc|\leq 1$. In this case we see, that the above estimates
(and the proofs thereof) imply
\begin{eqnarray}
&&\sup_{ t\in [0,T)} \int_M |\Riem(\cdot,t)|^2 d\mu_{g(t)} \leq c_1 < \infty \cr
&&\int_0^T\int_M |\Rc|^4(\cdot,t)d\mu_{g(t)} dt\leq c_2 < \infty
\end{eqnarray}
for explicit  constants $c_1=c_1(M,g(0),T)$ and $c_2(M,g(0),T)$  (see Theorem \ref{integralest}).

In another paper, \cite{BZ}, which recently appeared, the authors 
also consider Ricci flow of four manifolds with bounded scalar curvature.
Using different methods, they also independently showed, in this case, that the $L^2$ norm of the
Riemannian curvature  remains bounded as $t \upto T$, and they investigate the structure
of the limiting space one obtains by letting  $t \upto
T$: see Theorem 1.8 and Corollary 1.11 of \cite{BZ}.
In a sequel paper, \cite{Si}, we examine the structure of (possibly) singular limits $(X,d_X) := \lim_{t \upto T}
(M,d(g(t)))$, where the limit is a Gromov-Hausdorff limit. This limit
always exists, and we show that $(X,d_X)$ is a $C^0$-Riemannian orbifold, and that it is
possible to flow this space for a short time using the orbifold Ricci flow.

\section{Background, previous results and notation}
Here we list some integral curvature estimates that have been shown to
hold for the Ricci flow in a general setting. This is by no means an exhaustive list.
For more references, we refer to references in the papers we have
listed here.

In the paper, \cite{HaSurface},  the author showed that\\
$\int_M (\Sc  \log\Sc)(\cdot,t) d\mu_{g(t)}  \leq  \int_M (\Sc
\log\Sc)(\cdot,0) d\mu_{g(0)}$ for all $t>0$, 
for any solution to the normalised Ricci flow $\partt g = -2\Rc  + rg$
on a surface (that is, on a two dimensional manifold) which has $\Sc(\cdot,0) >0$, where $r(t):= \frac{\int_M
  \Sc(\cdot,t) d\mu_{g(t)}}{\vol(M,g(t))}$ (see Theorem 7.2 in \cite{HaSurface}).
In \cite{ChowI}, the author showed that\\
 $\int_0^{\infty} \int_M (\Sc(\cdot,t) -r(t))^2
d\mu_{g(t)} dt < \infty$ for any solution to Ricci flow on the sphere.

In the paper \cite{TZ}, the authors proved 
integral curvature estimates for
solutions to the normalised K\"ahler Ricci flow on compact manifolds with bounded
diameter and positive scalar curvature (these solutions exist for all
time and have bounded scalar curvature and diameter due to a result of
Perelman: see \cite{ST}). In particular, they show
there, that \\
$\int_{M^n} (|\Riem|^2 + |\Ricci|^4)(\cdot,t) d\mu_{g(t)} \leq \Lambda $ for all $t>0$ for some
$\Lambda< \infty$. This estimate is obtained by  showing that
various integral quantities containing derivatives of  the potential function
remain bounded as time increases, and using then the Chern-Weil Theory (see
Section 4 of the paper \cite{TZ} for details, in particular Lemma 4.2 and
Theorem 4.5 there).

As we mentioned in the introduction, in the paper \cite{BZ} the
authors  independently
recently showed, that if the scalar curvature is bounded on $[0,T)$
and $M^n
= M^4$ is a four dimensional smooth closed manifold, then the $L^2$
integral of the Riemannian curvature remains bounded as $t \upto T$, 
and  they investigate the structure
of the limiting space one obtains by letting  $t \upto
T$: see Theorem 1.8 and Corollary 1.11 in \cite{BZ}.

\hfill\break
{\bf Notation}:
\hfill\break
We use the Einstein  summation convention, and
we use the notation of Hamilton \cite{HaThree}.\hfill\break
For $i \in \{1,\ldots,n\}$,
$ \parti{}{x^i}$ denotes a coordinate vector, and $dx^i$  is the corresponding one
form.\hfill\break
$(M^n,g)$ is an $n$-dimensional  Riemannian manifold with Riemannian metric $g$.
\hfill\break
$g_{ij} = g(\parti{}{x^i},\parti{}{x^i}) $ is the Riemannian metric $g$
with respect to this coordinate system.\hfill\break
$g^{ij}$ is the inverse of the Riemannian metric ($g^{ij}g_{ik} = \delta_{jk}$).\hfill\break
$d\mu_{g}$ is the volume form associated to $g$.\hfill\break
$\Rm(g)_{ijkl} = {{}^g\Riem}_{ijkl} = \Riem(g)_{ijkl} = \Sc_{ijkl}$ is the full
Riemannian curvature Tensor.\hfill\break
$\Weil(g)_{ijkl}$ is the Weil Tensor.\hfill\break
${{}^g\Rc}_{ij} = \Ricci_{ij} = \Sc_{ij}:= g^{kl}\Sc_{ikjl}$ is the Ricci curvature.\hfill\break
$\Sc:= \Sc_{ijkl}g^{ik}g^{jl}$ is the scalar curvature. \hfill\break
${}^{g}\grad T = \grad T$ is the covariant derivative of $T$ with respect to $g$. For example,
locally $\grad_i T_{jk}^s = (\grad T)( \parti{}{x^i}, 
\parti{}{x^j}, \parti{}{x^k}, d x^s)$
(the first index denotes the direction in which the covariant derivative
is taken) if locally $T = T_{jk}^s dx^j \otimes dx^k \otimes \parti{}{x^s}$. \hfill\break
$|T| = {{}^g|T|}$ 
is the norm of a tensor with respect to a metric $g$. For example
for $T = T_{jk}^s dx^j \otimes dx^k \otimes \parti{}{x^s}$.
$|T|^2 = g^{im}g^{jn}g_{ks} T_{ij}^s T_{mn}^k$.\hfill\break
Sometimes we  make it clearer which Riemannian metric we are considering by
including the metric in the definition.
For example $\Sc(h)$ refers to the scalar curvature of the Riemannian
metric $h$.\hfill\break
We suppress the $g$ in the notation used for the norm, $|T| = {}^g|T| $,
and for other quantities,
in the case that is is clear from the context which Riemannian metric
we are considering.\\

\section{Integral Inequalities in four dimensions}

In this paper we  consider (unless otherwise stated) smooth
families of Riemannian metrics 
$(M^n,g(t))_{t \in [0,T)}$ on $n$ dimensional compact, connected  manifolds
without boundary
which solve the Ricci flow equation
$$\partt g(t) = -2\Ricci(g(t)),$$  for all $t \in [0,T)$. 
We will mainly be interested in the case that $n=4$.

The following evolution equations hold for the Ricci flow
(see \cite{HaThree})
\begin{equation}
\begin{split}
& \partt |\Rc|^2  = \lap |\Rc|^2 -2| \grad \Rc|^2 + 
4\Rm^{ikjl} \Rc_{ij} \Rc_{kl}  \\ \label{riccievn}
& \partt \Sc  = \lap \Sc + 2|\Rc|^2.\\
\end{split}
\end{equation}

Applying the maximum principle to the evolution equation for $\Sc$
above, we obtain the following well known fact: if $\Sc(x,0) \geq  c$ then 
$\Sc(x,t)\geq c$ for all $x \in M$ and all $t\in [0,T)$.
In the following, we will assume (unless otherwise stated), that the
scalar curvature is bounded from below by $-1$ for all times: if it is not then
we may scale the solution $g(\cdot,t)$ by  $\ti g(\cdot,\ti t):= c g(\cdot, \frac {\ti
  t}{c})$,
where $c := |\inf_{x \in M} R(x,0)|>0$ to obtain
a new solution $(M,\ti g(\ti t))_{t \in [0,\ti T)}$, where $\ti T :=
c T$, which satisfies  $\inf_M \ti \Sc(x,\ti t ) \geq -1$ for all $0
\leq \ti t < \ti T$.

This means that the function  $ f := \frac{|\Rc|^2}{\Sc+2}$ is well
defined.

 In the proof of Lemma 3.2 of \cite{CaoX} the two evolution equations above were combined
to obtain the following evolution
equation for 
$ f := \frac{|\Rc|^2}{\Sc+c}$ at any point in space time where $\Sc +c
>0$ (see also Lemma \cite{Knopf}, where  related evolution
inequalities are shown).

\begin{equation}
\begin{split}
&\partt f = \lap f -2\frac{|Z|^2}
{(\Sc+c)^3} - 2 \frac{|\Rc|^4}{(\Sc+c)^2} + 4 \frac{\Rm(\Rc,\Rc)}{ (\Sc+c)}\\
\end{split}
\end{equation}
where $\Rm(\Rc,\Rc)$ is the function given locally by
$\Rm(\Rc,\Rc) = \Rm^{ikjl}\Rc_{ij}\Rc_{kl}$
and 
$Z:= (\grad \Ricci)(\Sc+c) - (\grad \Sc)( \Ricci)$ is the tensor
given locally by 
$Z_{kis}:= (\grad_k \Rc)_{is}(\Sc+c) - (\grad_k(\Sc+c)) (\Rc_{is})$.
The evolution equation for the integral of $f$ is then given as follows.

\begin{lemma}
Let $(M^n,g(t))_{t\in [0,T)}$ be a smooth solution to Ricci flow  on a
four dimensional 
compact manifold $M^n$ without boundary and assume that $\inf_M
\Sc(\cdot,0) >c$.  Then
\begin{eqnarray}
 &&\ddt \int_M  f d\mu_g = \int_M \Big(-2\frac{|Z |^2}{(\Sc+c)^3} 
 -2 f^2  +  4\frac {\Rm(\Rc,\Rc)}{(\Sc+c)}
- f \Sc \Big) \ \ d\mu_g \label{evolutionf}
\end{eqnarray}
for $ f := \frac{|\Rc|^2}{\Sc+c}$.
\end{lemma}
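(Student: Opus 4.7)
The plan is to integrate the pointwise evolution equation for $f$ already displayed just above the lemma, combine with the standard evolution $\partial_t d\mu_g = -\Sc\, d\mu_g$ of the volume form under Ricci flow, and use the divergence theorem to kill the Laplacian term.

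First I would verify that $f$ is globally smooth and that the pointwise formula is valid on all of $M\times[0,T)$. Since $\inf_M \Sc(\cdot,0)>c$, the scalar maximum principle applied to $\partial_t \Sc = \Delta \Sc + 2|\Rc|^2 \geq \Delta \Sc$ gives $\Sc(\cdot,t)>c$ everywhere, so $\Sc+c>0$ and the denominator in $f = |\Rc|^2/(\Sc+c)$ is pointwise bounded away from zero on any $[0,S]\subset[0,T)$. Hence the pointwise evolution equation
\[
\partial_t f = \Delta f - 2\frac{|Z|^2}{(\Sc+c)^3} - 2\frac{|\Rc|^4}{(\Sc+c)^2} + 4\frac{\Rm(\Rc,\Rc)}{\Sc+c}
\]
holds on all of $M\times[0,T)$, and $f$ is smooth.

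Next I would differentiate under the integral. Writing $\frac{d}{dt}\int_M f\, d\mu_g = \int_M \partial_t f\, d\mu_g + \int_M f\, \partial_t d\mu_g$, I use the standard identity $\partial_t d\mu_g = \tfrac12 g^{ij}\partial_t g_{ij}\, d\mu_g = -\Sc\, d\mu_g$ to rewrite the second term as $-\int_M f\Sc\, d\mu_g$. For the first term, I substitute the pointwise evolution equation; the Laplacian contribution $\int_M \Delta f\, d\mu_g$ vanishes by the divergence theorem since $M$ is compact and without boundary. Noting that $\frac{|\Rc|^4}{(\Sc+c)^2} = f^2$, the remaining terms combine exactly to
\[
\int_M \Big(-2\frac{|Z|^2}{(\Sc+c)^3} - 2f^2 + 4\frac{\Rm(\Rc,\Rc)}{\Sc+c} - f\Sc\Big)\, d\mu_g,
\]
which is the claimed identity.

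There is essentially no main obstacle here: the computation is a routine application of the product rule and Stokes' theorem once the pointwise identity for $\partial_t f$ is in hand and once one has confirmed that $\Sc+c$ stays strictly positive so that no boundary-of-the-domain issues arise. The only thing worth emphasizing is the use of the maximum principle to ensure positivity of $\Sc+c$ throughout the flow, which is what makes term-by-term integration legitimate.
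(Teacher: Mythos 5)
Your argument is correct and is exactly the paper's proof: the paper's one-line justification is to combine the pointwise evolution equation for $f$ stated just before the lemma with $\partt d\mu_g = -\Sc\, d\mu_g$, and you have simply filled in the routine details (differentiation under the integral, the divergence theorem killing $\int_M \lap f\, d\mu_g$, and the identification $|\Rc|^4/(\Sc+c)^2 = f^2$). The only caveat, inherited from the statement itself rather than introduced by you, is that the positivity of $\Sc + c$ really requires $\inf_M \Sc(\cdot,0) > -c$ (which is what the maximum principle then propagates), as is clear from the paper's later use of the lemma with $c=2$ and $\Sc(\cdot,0) > -1$.
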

\begin{proof}
Use the above evolution equation for $f$ with the fact that
$\partt d\mu_g = -\Sc d\mu_g$ (see \cite{HaThree} for this last fact).
\end{proof}

As we mentioned above, there is no great loss of generality in
assuming $\inf_M \Sc(\cdot,t) \geq-1$ for all $t \in [0,T) $, and so we may choose $c=2$
(the special case that $\Sc >0$ everywhere, in which case we choose
$c=0$, will be handled below separately).
We will estimate the last two terms appearing on the right hand side of the
integral equality \eqref{evolutionf} using: (i) the Euler characteristic $\chi$,
(ii) the good (second) negative term on the right hand side of the above equality

\begin{theo}\label{firstestf}
Let $(M^4,g(t))_{t\in [0,T)}$ be a smooth solution to Ricci flow  on a
four dimensional compact manifold $M^4$ without boundary and assume that $\inf_M
\Sc(\cdot,0) > -1$.  Then
\begin{eqnarray}
 &&\ddt \int_M  f d\mu_g  \leq
2^8 \pi^2 \chi + \int_M ( -f^2 +   64 f + 2^{10} \Sc^2 )  \ \ d\mu_g
\label{generalint}
\end{eqnarray}
for $ f := \frac{|\Rc|^2}{\Sc+2}$.
\end{theo}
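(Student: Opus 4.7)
The plan is to start from the pointwise evolution equation \eqref{evolutionf} with $c=2$ and combine a Cauchy--Schwarz estimate of the cubic curvature term with the four-dimensional Chern--Gauss--Bonnet identity. The ``good'' term $-2f^2$ in \eqref{evolutionf} will absorb the $f^2$ debt produced by the AM--GM inequalities, while the Euler characteristic will enter through Gauss--Bonnet applied to $\int |\Rm|^2$.

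The first step is to discard the manifestly non-positive gradient term $-2|Z|^2/(\Sc+2)^3$. The critical cubic term is bounded pointwise by Cauchy--Schwarz applied to the full contraction:
\[
\frac{4|\Rm(\Rc,\Rc)|}{\Sc+2} \le \frac{4|\Rm|\,|\Rc|^2}{\Sc+2} = 4|\Rm|\,f.
\]
I then apply weighted AM--GM $4|\Rm|f \le 8|\Rm|^2 + \frac{1}{2} f^2$, which combined with $-2f^2$ yields a net $-\frac{3}{2}f^2 + 8|\Rm|^2$. The lower-order $-f\Sc$ term in \eqref{evolutionf} is dispatched by Young's inequality $f|\Sc| \le \lambda_1 f^2 + (4\lambda_1)^{-1}\Sc^2$ for some small $\lambda_1>0$ to be chosen.

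Next I integrate over $M$ and invoke the Chern--Gauss--Bonnet formula in dimension four,
\[
32\pi^2\chi(M) = \int_M \bigl(|\Rm|^2 - 4|\Rc|^2 + \Sc^2\bigr)\, d\mu_g,
\]
which converts $8\int_M|\Rm|^2\, d\mu_g$ into $256\pi^2\chi + 32\int_M|\Rc|^2\, d\mu_g - 8\int_M\Sc^2\, d\mu_g$, producing the $2^8\pi^2\chi$ prefactor required by the statement. The remaining $32\int|\Rc|^2$ is rewritten using the definition of $f$: $|\Rc|^2 = f(\Sc+2) = f\Sc + 2f$. The $+2f$ piece yields the $64\int f$ term of the conclusion, and the cross piece $32\int f\Sc$ is controlled by a second Young's inequality $32f|\Sc| \le 32\lambda_2 f^2 + (8/\lambda_2)\Sc^2$.

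Assembling everything gives
\[
\ddt \int_M f\, d\mu_g \le 2^8\pi^2\chi + \Bigl(-\tfrac{3}{2} + 32\lambda_2+\lambda_1\Bigr)\!\int_M f^2\, d\mu_g + 64\!\int_M f\, d\mu_g + \Bigl(\tfrac{8}{\lambda_2}+\tfrac{1}{4\lambda_1}-8\Bigr)\!\int_M \Sc^2\, d\mu_g.
\]
The only delicate step is tuning $\lambda_1,\lambda_2>0$ so that $32\lambda_2+\lambda_1 = \frac{1}{2}$, making the $f^2$ coefficient exactly $-1$, while keeping the resulting $\Sc^2$ constant bounded above by $2^{10}=1024$. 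A symmetric choice such as $\lambda_1=\lambda_2=1/66$ does this cleanly, yielding a $\Sc^2$ constant of about $537$, safely below $1024$. All remaining ingredients---the non-positivity of the $|Z|^2$ term, the Cauchy--Schwarz bound $|\Rm(\Rc,\Rc)|\le|\Rm|\,|\Rc|^2$, and the four-dimensional Chern--Gauss--Bonnet identity---are standard, so the proof reduces to this bookkeeping of constants.
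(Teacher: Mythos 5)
Your argument is correct and is essentially the paper's proof: the same Cauchy--Schwarz/Young splitting of the cubic term into $\tfrac12 f^2+8|\Rm|^2$, the same Gauss--Bonnet substitution $8|\Rm|^2=8I+32|\Rc|^2-8\Sc^2$, the same rewriting $32|\Rc|^2=32f\Sc+64f$, and Young's inequality on the two $f\Sc$ cross terms. The only difference is the choice of Young parameters (the paper effectively takes $\lambda_1=\tfrac14$, $\lambda_2=\tfrac{1}{128}$, giving an $\Sc^2$ coefficient of $1017\le 2^{10}$), which is immaterial.
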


\begin{proof}
We estimate  the third term appearing on the right hand side of
\eqref{evolutionf}
with Young's inequality (remembering that we have now fixed $f = 
\frac{|\Rc|^2}{\Sc+2}$ in the definition of $f$ ):
\begin{equation}
\begin{split}
 4 \frac{\Rm(\Rc,\Rc)}{(\Sc+2)} 
& \leq  \frac{|\Rc|^4}{ 2(\Sc+2)^2} + 8|\Riem|^2  \\
& =   \frac{|\Rc|^4}{2 (\Sc+2)^2} +8 (|\Riem|^2 - 4 |\Rc|^2
+ \Sc^2) +  32|\Rc|^2 -8\Sc^2  \\
& = \frac{f^2}{2} + 8 I + 32|\Rc|^2 - 8\Sc^2 \label{Rmrc}
\end{split}
\end{equation}
where $I =  |\Riem|^2 -4 |\Rc|^2
+ \Sc^2$ is the integrand occurring in the generalised Gauss-Bonnet
Theorem, and $I$ satisfies
\begin{equation}
\int_M I \ \ d\mu_g = 2^5 \pi^2 \chi
\end{equation}
where $\chi = \chi(M)$ is the Euler characteristic of $M$  (see notes
in Appendix \ref{gaussbonnet}). Note that if
$M$ is not oriented, then this formula is correct with $\chi(M):=
\frac{1}{2}\chi(\ti M)$ where $\ti M$ is the {\it double cover} of $M$
,which is oriented (see Theorem 13.9  \cite{Lee}),
and $\chi(\ti M)$ is the Euler-characteristic of $\ti M$.
The second last term of the above inequality is
$ 32 |\Rc|^2 = 32 f (\Sc + 2)=32f \Sc + 64f  \leq \frac{f^2}{4} + 2^{10}
\Sc^2 + 64 f.$
Hence 

\begin{equation}
\begin{split}
 4 \frac{\Rm(\Rc,\Rc)}{(\Sc+2)}  \leq \frac{f^2}{2} + 8  I  + \frac{f^2}{4} +
 2^{10} \Sc^2 + 64f - 8\Sc^2
\end{split}
\end{equation}
Also,
\begin{equation}
-f  \Sc = \leq \frac{f^2}{4} + \Sc^2
\end{equation}
Combining these two estimates we obtain
\begin{eqnarray}
 -2 f^2  +  4\frac {\Rm(\Rc,\Rc)}{(\Sc+2)}
- f \Sc  &\leq& - f^2  +  8 I + 2^{10}  \Sc^2 + 64f.
\end{eqnarray}

Using this inequality in the equality \eqref{evolutionf} of the lemma
above, we get
\begin{eqnarray*}
\ddt \int_M  f \ \ d\mu_g &=&\int_M \Big( -2\frac{|Z |^2}{(\Sc+2)^3} 
 -2 f^2  +  4\frac {\Rm(\Rc,\Rc)}{(\Sc+2)}
- f \Sc \Big) \ \ d\mu_g \\
&\leq&\int_M ( -f^2 + 8 I +   2^{10} \Sc^2 + 64f) \ \ d\mu_g\cr
&=&2^8 \pi^2 \chi + \int_M ( -f^2 +   64 f + 2^{10} \Sc^2 )  \ \ d\mu_g
\end{eqnarray*}
as required.
\end{proof}

Integrating this inequality with respect to time, we obtain 
\begin{coro}\label{intestgen}
Let $(M^4,g(t))_{t\in [0,T)}$ be a smooth solution to Ricci flow  on a
compact four dimensional manifold $M^4$ without boundary and assume that $\inf_M
\Sc(\cdot,0) > -1$.  Then
\begin{eqnarray}
&& \int_M \frac{|\Rc|^2(\cdot,S)}{(\Sc(\cdot,S) +2)} d\mu_{g(S)}  +
 \int_0^S  \int_M  \frac{|\Rc|^4(\cdot,t)}{(\Sc(\cdot,t) +2)^2} d\mu_{g(t)} dt  \cr 
&& \ \ \ \ \leq 
2^2\pi^2 \chi(e^{64S} - 1) + e^{64S}\int_M
\frac{|\Rc|^2(\cdot,0)}{(\Sc(\cdot,0) +2)} d\mu_{g(0)} \cr
&& \ \ \ \ \ \
+2^{10}e^{64S}\int_0^S \int_M \Sc^2(\cdot,t) d\mu_{g(t)} dt   \cr
&& =: c_0(M,g(0),S) + 2^{10}e^{64S}\int_0^S \int_M \Sc^2(\cdot,t) d\mu_{g(t)} dt \label{generalint1}
 \end{eqnarray}
\begin{eqnarray}
 &&\int_M |\Rc|(\cdot,S) d\mu_{g(S)} \cr
&& \ \ \ \ \leq  \vol(M,g(S)) + 2c_0(M,g(0),S)  + 2^{11}  e^{64S}\int_0^S \int_M \Sc^2(\cdot,t) d\mu_{g(t)} dt  \label{generalint2}
\end{eqnarray}
and
\begin{eqnarray}
&&\int_0^S \int_M |\Rc|^2  d\mu_{g(t)} dt \cr
&&\ \ \ \ \leq \int_0^S \vol(M,g(t)) dt 
+2^3c_0(M,g(0),S)
+ 2^{13} e^{64S}\int_0^S \int_M \Sc^2(\cdot,t) d\mu_{g(t)} dt
 \label{generalint3}
\end{eqnarray}
and
\begin{eqnarray}
&&\int_0^S \int_M |\Rm|^2  d\mu_{g(t)} dt \cr 
&& \ \  \
\leq 4\int_0^S \vol(M,g(t)) dt 
+ 2^5( c_0(M,g(0),S) + \pi^2 \chi S) \cr
&& \ \ \ \ +2^{15} e^{64S}\int_0^S \int_M \Sc^2(\cdot,t) d\mu_{g(t)} dt 
 \label{generalint4} 
\end{eqnarray}
for all $0\leq S <T$, where $\chi = \chi(M)$ is the
Euler-characteristic of $M$, and 
\begin{eqnarray}
&& c_0(M,g(0),S):= 2^2\pi^2 \chi(e^{64S} - 1) + e^{64S}\int_M
\frac{|\Rc|^2(\cdot,0)}{(\Sc(\cdot,0) +2)} d\mu_{g(0)} 
\end{eqnarray}
is defined in the statement above.
\end{coro}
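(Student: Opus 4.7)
\emph{Proof proposal.} The plan is to first integrate the pointwise-in-time differential inequality of Theorem~\ref{firstestf} via the integrating factor $e^{-64t}$ to obtain \eqref{generalint1}, and then to derive the three remaining bounds by applications of Young's inequality and (for the last one) the Chern--Gauss--Bonnet identity already used in the proof of Theorem~\ref{firstestf}.

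For \eqref{generalint1}, set $F(t):=\int_M f\,d\mu_g$ and $G(t):=\int_M f^2\,d\mu_g$. Theorem~\ref{firstestf} reads $F'(t)+G(t)\le 2^8\pi^2\chi+64\,F(t)+2^{10}\int_M\Sc^2\,d\mu_g$. Multiplying by $e^{-64t}$, integrating from $0$ to $S$, and multiplying back by $e^{64S}$ produces exactly \eqref{generalint1}: one uses $e^{64(S-t)}\ge 1$ on $[0,S]$ to drop the exponential weight from $\int_0^S G\,dt$, and $\int_0^S e^{64(S-t)}\,dt=(e^{64S}-1)/64$ to produce the constant $2^2=2^8/64$ in front of the $\chi$ term.

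For \eqref{generalint2}, write $|\Rc|=\sqrt{f(\Sc+2)}$ and apply AM--GM to obtain the pointwise bound $|\Rc|\le f+(\Sc+2)/4$. Integrating over $M$ and using that in dimension four Cauchy--Schwarz applied to $\Sc=g^{ij}\Rc_{ij}$ gives $|\Sc|\le 2|\Rc|$ (so that $\tfrac14\int_M\Sc\,d\mu_g\le\tfrac12\int_M|\Rc|\,d\mu_g$), one absorbs the $\Sc$ term on the right into the left side and obtains $\int_M|\Rc|\,d\mu_g\le 2\int_M f\,d\mu_g+\vol(M,g(S))$; substituting \eqref{generalint1} yields \eqref{generalint2}. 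For \eqref{generalint3}, Young's inequality gives $|\Rc|^2=f(\Sc+2)\le 2f^2+(\Sc+2)^2/8$; bounding $(\Sc+2)^2\le 2\Sc^2+8$, integrating in space and time, and substituting \eqref{generalint1} reduces the claim to \eqref{generalint1}.

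For \eqref{generalint4}, the Chern--Gauss--Bonnet identity $\int_M(|\Rm|^2-4|\Rc|^2+\Sc^2)\,d\mu_g=2^5\pi^2\chi$ (already invoked in the proof of Theorem~\ref{firstestf}) gives, after dropping the nonnegative $\int\Sc^2$, the spatial bound $\int_M|\Rm|^2\,d\mu_g\le 2^5\pi^2\chi+4\int_M|\Rc|^2\,d\mu_g$; integrating in time and substituting \eqref{generalint3} completes the estimate. The argument is essentially routine bookkeeping of constants; the only slightly non-obvious step is the self-absorption used for \eqref{generalint2} via $|\Sc|\le\sqrt{n}\,|\Rc|$ in $n=4$.
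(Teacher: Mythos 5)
Your proposal follows the paper's own proof essentially step for step: the integrating factor $e^{-64t}$ applied to Theorem~\ref{firstestf} yields \eqref{generalint1}, Young's inequality combined with the four--dimensional trace bound $|\Sc|\le 2|\Rc|$ yields \eqref{generalint2} and \eqref{generalint3}, and the Gauss--Bonnet identity converts \eqref{generalint3} into \eqref{generalint4}. The only bookkeeping caveat is in \eqref{generalint3}: your split $|\Rc|^2\le 2f^2+(\Sc+2)^2/8$ leads to a coefficient $2c_0$ rather than the stated $2^3c_0$, which is a genuine (if trivial) mismatch precisely when $c_0<0$ (possible when $\chi<0$); the paper instead absorbs the resulting $\Sc^2$ term into $\tfrac12|\Rc|^2$ on the left to get the pointwise bound $|\Rc|^2\le 8\,|\Rc|^4/(\Sc+2)^2+1$, which reproduces the stated constants exactly.
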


\begin{proof}
Using the inequality  \eqref{generalint}, we see that

\begin{eqnarray}
&&\ddt (e^{-64t}\int_M  f(\cdot,t) d\mu_{g(t)} ) + e^{-64t}\int_M
f^2(\cdot,t) d\mu_g(t) \cr
&& \ \ \ \ \ \leq
e^{-64t}2^8 \pi^2 \chi + e^{-64t}\int_M  2^{10} \Sc^2(\cdot,t)  d\mu_g(t)
\end{eqnarray}

Integrating this inequality from $0$ to $S$ implies
\begin{eqnarray}
&&e^{-64S}\int_M  f(\cdot,S) d\mu_{g(S)}  +  e^{-64S} \int_0^S\int_M
f^2(\cdot,t) d\mu_{g(t)} dt\cr
&&\ \ \ \  \leq e^{-64S}\int_M  f(\cdot,S) d\mu_{g(S)}  +  \int_0^S e^{-64t}\int_M
f^2(\cdot,t) d\mu_{g(t)} dt\cr
&& \ \ \ \  = \int_0^S \Big( \ddt (e^{-64t}\int_M  f(\cdot,t)
d\mu_{g(t)} )
+e^{-64t}\int_M f^2(\cdot,t) d\mu_{g(t)} \Big)dt \cr
&& \ \ \ \ \ \ + \int_M  f(\cdot,0) d\mu_{g(0)}  \cr
&& \ \ \ \ \leq
\int_{0}^S e^{-64t}2^8 \pi^2 \chi dt+\int_0^S e^{-64t}\int_M  2^{10}
\Sc^2(\cdot,t)  d\mu_g(t) dt + \int_M  f(\cdot,0) d\mu_{g(0)}  \cr
&& \ \ \ \ = -4(e^{-64S} - 1) \pi^2 \chi  +\int_0^S e^{-64t}\int_M  2^{10}
\Sc^2(\cdot,t)    d\mu_g(t) dt  + \int_M  f(\cdot,0) d\mu_{g(0)}   \cr
&& \ \ \ \ \leq 4(1-e^{-64S} ) \pi^2 \chi  +\int_0^S \int_M  2^{10}
\Sc^2 (\cdot,t)  d\mu_g(t) dt + \int_M  f(\cdot,0) d\mu_{g(0)} 
\end{eqnarray}
which, after multiplying by $e^{64S}$, gives us 
 the first integral inequality \eqref{generalint1}.

The second inequality can be obtained from the first as follows.
 \begin{eqnarray}
|\Rc| && \leq \frac{|\Rc|^2}{(\Sc +1)} + \frac{(\Sc + 1)}{4} \cr
&&\leq \frac{|\Rc|^2}{(\Sc +1)} + \frac{|\Sc|}{4} + \frac{1}{4} \cr
&& \leq  \frac{|\Rc|^2}{(\Sc +1)}  + \frac{|\Rc|}{2} +  \frac{1}{4} 
\end{eqnarray}
in view of the fact that (in four dimensions) $|\Sc| \leq 2 |\Rc|$,
and hence
\begin{eqnarray}
|\Rc|  && \leq  \frac{2|\Rc|^2}{(\Sc +1)}  +1.
\end{eqnarray}
The third inequality can be obtained from the first as follows.
\begin{eqnarray}
|\Rc|^2 && \leq \frac{4|\Rc|^4}{(\Sc +1)^2} + \frac{(\Sc + 1)^2}{16} \cr
&&\leq \frac{4|\Rc|^4}{(\Sc +1)^2} + \frac{|\Sc|^2}{8} + \frac{1}{8} \cr
&& \leq \frac{4|\Rc|^4}{(\Sc +1)^2} + \frac{|\Rc|^2}{2} + \frac{1}{8}
\end{eqnarray}
since 
$|\Sc|^2 \leq 4 |\Rc|^2 $ in four dimensions, and hence
\begin{eqnarray*}
&&|\Rc|^2  \leq \frac{8|\Rc|^4}{(\Sc + 1)^2} + 1.
\end{eqnarray*}
The last inequality follows from the third inequality in view of the
generalised Gauss-Bonnet theorem.
\end{proof}

In the case that $\Sc>0$ everywhere, we may consider the function
$f = \frac{|\Rc|^2}{\Sc}$ , that is we choose $c=0$. In this case,
some of the terms in the integral inequalities above simplify. In
particular, the volume terms don't appear.

\begin{theo}\label{posscalar}
Let $(M^4,g(t))_{t\in [0,T)}$ be a smooth solution to Ricci flow  on a
compact four dimensional manifold $M^4$ without boundary and assume that $\inf_M
\Sc(\cdot,0) > 0$.  Then
\begin{eqnarray}
 &&\int_M |\Rc|(\cdot,S) d\mu_{g(S)} \cr
&& \ \ \ \ \leq  2a_0(M,g(0),S)  + 2^{11}  e^{64S}\int_0^S \int_M \Sc^2(\cdot,t) d\mu_{g(t)} dt
 \label{generalintsc2}
\end{eqnarray}
and
\begin{eqnarray}
&&\int_0^S \int_M |\Rc|^2  d\mu_{g(t)} dt \cr
&&\ \ \ \ \leq 2^3a_0(M,g(0),S) 
+ 2^{13} e^{64S}\int_0^S \int_M \Sc^2(\cdot,t) d\mu_{g(t)} dt
 \label{generalintsc3}
\end{eqnarray}
and
\begin{eqnarray}
&&\int_0^S \int_M |\Rm|^2  d\mu_{g(t)} dt \cr 
&& \ \ \ \ 
\leq 2^5 \pi^2 \chi S + 2^5a_0(M,g(0),S)  + 2^{15}e^{64S}\int_0^S \int_M \Sc^2(\cdot,t) d\mu_{g(t)} dt
 \label{generalintsc4} 
\end{eqnarray}
for all $0\leq S <T$, where $\chi = \chi(M)$ is the
Euler-characteristic of $M$, and 
\begin{eqnarray}
&& a_0(M,g(0),S):= 2^2\pi^2 \chi(e^{64S} - 1) + e^{64S}\int_M
\frac{|\Rc|^2}{\Sc}(\cdot,0) d\mu_{g(0)}.
\end{eqnarray}
\end{theo}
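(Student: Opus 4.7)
The plan is to mirror the proofs of Theorem~\ref{firstestf} and the preceding corollary, taking $c=0$ in the weight $f := |\Rc|^2/\Sc$. The maximum principle applied to $\partt \Sc = \Delta \Sc + 2|\Rc|^2$ propagates the hypothesis $\inf_M \Sc(\cdot,0) > 0$ into $\Sc(\cdot,t) > 0$ for all $t \in [0,T)$, so $f$ is smooth on $M \times [0,T)$. The pointwise identity for $\partt f$ stated immediately before Lemma~3.2 holds with $c=0$, and integrating against $d\mu_g$ (using $\partt d\mu_g = -\Sc\,d\mu_g$) yields the analog of identity~\eqref{evolutionf}.

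The main computation mirrors the Young's inequality step of Theorem~\ref{firstestf}. As there, $4\Rm(\Rc,\Rc)/\Sc \leq \tfrac{1}{2}f^2 + 8|\Rm|^2$, and inserting the Gauss--Bonnet integrand $I = |\Rm|^2 - 4|\Rc|^2 + \Sc^2$ leaves $32|\Rc|^2 = 32 f \Sc$ to be controlled. Young's inequality gives $32 f \Sc \leq \tfrac{1}{4}f^2 + 2^{10}\Sc^2$ directly, with no linear $+64f$ correction; that correction appeared in the $c=2$ case only because $|\Rc|^2 = f(\Sc+2) = f\Sc + 2f$. Moreover $-f\Sc \leq 0$ since $\Sc > 0$. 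Carrying a harmless $+64 f$ along for parity with the preceding corollary, I arrive at
\[
\ddt \int_M f\, d\mu_g \;\leq\; 2^8 \pi^2 \chi + \int_M \bigl(-f^2 + 64 f + 2^{10}\Sc^2\bigr)\, d\mu_g,
\]
and Gronwall applied to $e^{-64t}\int_M f\,d\mu_g$ produces the base estimate
\[
\int_M f(\cdot,S)\, d\mu_{g(S)} + \int_0^S\! \int_M f^2\, d\mu_g\, dt \;\leq\; a_0(M,g(0),S) + 2^{10} e^{64S}\!\int_0^S\! \int_M \Sc^2\, d\mu_g\, dt.
\]

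The three stated inequalities follow from the pointwise algebraic bounds of the preceding corollary, specialized to $c=0$. Young's gives $|\Rc| \leq |\Rc|^2/\Sc + \Sc/4$, and the four-dimensional pinching $|\Sc| \leq 2|\Rc|$ absorbs $\Sc/4$ into $|\Rc|/2$, yielding the pointwise bound $|\Rc| \leq 2f$ with no additive constant; integrating gives~\eqref{generalintsc2} directly from the base estimate, with no $\vol(M,g(S))$ term. An analogous Young's argument on $|\Rc|^2 = f\Sc$, combined with $\Sc^2 \leq 4|\Rc|^2$, yields $|\Rc|^2 \leq 8 f^2$ pointwise, giving~\eqref{generalintsc3}. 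Finally, $|\Rm|^2 = I + 4|\Rc|^2 - \Sc^2 \leq I + 4|\Rc|^2$, so combining with the Gauss--Bonnet identity $\int_M I = 2^5 \pi^2 \chi$ and~\eqref{generalintsc3} produces~\eqref{generalintsc4}; the $2^5 \pi^2 \chi S$ term simply comes from integrating $\int_M I$ over $[0,S]$.

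I do not anticipate a genuine conceptual obstacle: this is essentially a cleaner rerun of the arguments already carried out for $c=2$. The only care needed is in tracking the dyadic constants $2^3$, $2^5$, $2^{11}$, $2^{13}$, $2^{15}$ so that they fall out correctly, and in confirming that the absence of an additive ``$+1$'' in the pointwise inequalities -- previously the source of the volume terms -- is the direct consequence of $c=0$. Structurally, no volume term survives precisely because $c=0$ eliminates both the additive constant in the weight and the corresponding residue in the algebraic manipulations of the preceding corollary.
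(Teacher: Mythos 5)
Your proposal is correct and follows essentially the same route as the paper, which simply reruns the proof of Theorem \ref{firstestf} and its corollary with $c=0$ and then uses the four-dimensional pinching $\Sc \leq 2|\Rc|$ to get $|\Rc| \leq 2f$ (hence $|\Rc|^2 \leq 4f^2 \leq 8f^2$) before invoking Gauss--Bonnet for the $|\Rm|^2$ bound. Your observations that the $+64f$ term is retained only to reproduce the $e^{64S}$ factors in $a_0$, and that the volume terms disappear precisely because $c=0$ removes the additive constants in the pointwise inequalities, are both accurate and consistent with the paper's (much terser) argument.
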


\begin{proof}
We repeat the argument given in the proof of Theorem \ref{firstestf}, but
we use the function $f = \frac{|\Rc|^2}{\Sc}$ in place of
$f = \frac{|\Rc|^2}{\Sc + 2}$.
We use the fact that  $  |\Rc| = \frac{|\Rc|^2 }{|\Rc|} \leq
2\frac{|\Rc|^2}{\Sc} = 2f$ in four dimensions in the last part of the
argument to get
\eqref{generalintsc2} and \eqref{generalintsc3}.
The generalised Gauss-Bonnet theorem implies the last inequality \eqref{generalintsc4}.
\end{proof}

In the rest of this paper we often consider solutions $(M^4,g(t))_{t\in [0,T)}$ which satisfy the following {\it basic assumptions}.
\begin{itemize}
\item[(a)] $M^4$ is a smooth,  compact, connected four dimensional manifold without boundary
\item[(b)]  $(M^4,g(t))_{t \in [0,T)}$ is a smooth solution to the
  Ricci flow $\partt g(t) = -2\Ricci(g(t))$  for all $t \in [0,T)$ 
\item[(c)] $T< \infty$
\item[(d)] $\sup_{M^4\times[0,T)} |\Sc(x,t)| \leq 1$
\end{itemize}
If instead of $(d)$ we only have $\sup_{M\times[0,T)} |\Sc(x,t)| \leq
K < \infty$ for some constant $1<K<\infty$, then we may rescale the
solution $\ti g(\cdot,\ti t):= K g(\cdot, \frac {\ti t}{K})$ to obtain
a new solution $(M,\ti g(\ti t))_{t \in [0,\ti T)}$, where $\ti T := K T$, which satisfies the basic assumptions. 

Note that a solution which satisfies the basic assumptions also
satisfies
$\Sc(x,t) + 2 >0$ for all $ x \in   M$ for all $t\in [0,T)$ and hence
$ f := \frac{|\Rc|^2}{\Sc+2}$ is a well defined function.

For solutions satisfying the basic assumptions, a slight modification
of the above arguments leads to the following.
\begin{theo}\label{newfest}
Let $(M^4,g(t))_{t \in [0,T)}$ be a  solution to Ricci
flow satisfying the {\it basic assumptions}. 
Then 
\begin{eqnarray}
\ddt\int_M  f d\mu_g 
&\leq& 128\pi^2 \chi + \int_M ( -f^2 +   50 f) d\mu_g
\end{eqnarray}
for $f := \frac{|\Rc|^2}{\Sc+2}$, where $\chi = \chi(M)$ is the Euler
characteristic of $M$.
\end{theo}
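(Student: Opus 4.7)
The plan is to revisit the proof of Theorem \ref{firstestf}, starting from the evolution equation \eqref{evolutionf} with $c=2$, but to exploit basic assumption (d) that $|\Sc|\leq 1$ in order to absorb $\Sc^2$ contributions directly into linear multiples of $f$, and to rebalance Young's inequality so as to produce a smaller Gauss--Bonnet constant.

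First I would discard the non-positive term $-2|Z|^2/(\Sc+2)^3$ in \eqref{evolutionf} and apply Young's inequality to the cross term more sharply than before, taking
\[
4\frac{\Rm(\Rc,\Rc)}{\Sc+2} \leq 4\frac{|\Rm|\,|\Rc|^2}{\Sc+2} \leq 4|\Rm|^2 + \Big(\tfrac{|\Rc|^2}{\Sc+2}\Big)^{2} = 4|\Rm|^2 + f^2,
\]
so that the $f^2$ term is fully absorbed by the $-2f^2$ already present in \eqref{evolutionf}, leaving $-f^2$. The smaller coefficient $4$ in front of $|\Rm|^2$ (rather than $8$ in the proof of Theorem \ref{firstestf}) is exactly what will produce the Euler term $128\pi^2\chi = 4\cdot 2^5\pi^2\chi$ instead of $256\pi^2\chi$ after integration.

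Next I would use the Gauss--Bonnet decomposition $|\Rm|^2 = I + 4|\Rc|^2 - \Sc^2$, substitute $|\Rc|^2 = f(\Sc+2) = f\Sc + 2f$, and combine with the $-f\Sc$ term in \eqref{evolutionf} to obtain the pointwise inequality
\[
-2f^2 + 4\frac{\Rm(\Rc,\Rc)}{\Sc+2} - f\Sc \leq -f^2 + 4I + 15 f\Sc + 32 f - 4\Sc^2.
\]
The hypothesis $|\Sc|\leq 1$ now bounds $15f\Sc \leq 15 f$ and lets me drop the non-positive $-4\Sc^2$, giving $-f^2 + 4I + 47 f \leq -f^2 + 4I + 50 f$. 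Plugging this pointwise bound into the equality \eqref{evolutionf} and using $\int_M I\, d\mu_g = 2^5\pi^2\chi$ from the generalized Gauss--Bonnet theorem yields the claimed estimate.

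No real obstacle arises; the only delicate point is the choice of Young's coefficient, which must be tuned so that exactly $-f^2$ survives while the $|\Rm|^2$ factor is as small as possible. The linear coefficient $47$ is then automatically bounded by $50$ thanks to the basic assumption $|\Sc|\leq 1$, which is what obviates the $\Sc^2$ term present in the corresponding estimate of Theorem \ref{firstestf}.
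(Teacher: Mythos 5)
Your proposal is correct and follows essentially the same route as the paper: Young's inequality in the form $4|\Rm|f\leq 4|\Rm|^2+f^2$, the Gauss--Bonnet decomposition of $|\Rm|^2$, and the bound $|\Sc|\leq 1$ to control the linear terms (the paper groups them as $16f(\Sc+2)\leq 48f$ and $-f\Sc\leq 2f$, you as $15f\Sc+32f\leq 47f$, which is the same estimate up to bookkeeping). Nothing further is needed.
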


\begin{proof}
Using almost the same argument given at the beginning of the proof of
Theorem \ref{firstestf}, we see that
\begin{equation}
\begin{split}
 4 \frac{\Rm(\Rc,\Rc)}{(\Sc+2)} 
& \leq  f^2 + 4 I + 16 |\Rc|^2,
\end{split}
\end{equation}
where $I =  |\Riem|^2 -4 |\Rc|^2
+ \Sc^2$ is the integrand occurring in the generalised Gauss-Bonnet
Theorem.
The last term of the above inequality is
$ 16 |\Rc|^2 = 16 f (\Sc + 2) \leq 48 f$ since $\Sc \leq 1$.
Hence 
\begin{equation}
\begin{split}
 4 \frac{\Rm(\Rc,\Rc)}{(\Sc+2)}  \leq f^2 + 4  I  + 48f
\end{split}
\end{equation}
Also,
\begin{equation}
-f  \Sc = -f  (\Sc +2) + 2f   \leq 2f.
\end{equation}
Combining these two estimates we obtain
\begin{eqnarray}
 -2 f^2  +  4\frac {\Rm(\Rc,\Rc)}{(\Sc+2)}
- f \Sc  &\leq& - f^2  +  4 I + 50f.
\end{eqnarray}

Using this inequality in the equality \eqref{evolutionf},  we get
\begin{eqnarray*}
\ddt \int_M  f \ \ d\mu_g &=&\int_M \Big( -2\frac{|Z |^2}{(\Sc+2)^3} 
 -2 f^2  +  4\frac {\Rm(\Rc,\Rc)}{(\Sc+2)}
- f \Sc \Big) \ \ d\mu_g \\
&\leq&\int_M ( -f^2 + 4 I +   50 f) \ \ d\mu_g\cr
&=&128 \pi^2 \chi + \int_M ( -f^2 +   50 f)  \ \ d\mu_g
\end{eqnarray*}
as required.
\end{proof}

\begin{theo}\label{integralest}
Let $(M^4,g(t))_{t \in [0,T)}$ be a  smooth solution to Ricci
flow satisfying the {\it basic assumptions}. Then we have the following estimates:
\begin{eqnarray} 
&& \int_M  |\Rc|^2 (\cdot,t)d\mu_{g(t)} \leq   b(g(0),t) \ \ \forall
\ t \in [0,T) \label{no1} \\
&& \int_M |\Riem|^2( \cdot,t) \ \ d\mu_{g(t)} \leq 32 \pi^2 \chi  +
4b(g(0),t) \ \ \forall
\ t \in [0,T)  \label{no2} \\
&& \int_{0}^t \int_M |\Rc|^4(\cdot,t) d\mu_{g(t)} dt\leq b(g(0),t)  \
\ \ \forall t \in [0,T]  \label{no3}\\
&&\int_S^T \int_M |\Rc|^p(\cdot,t) d\mu_{g(t)} dt \cr
&& \ \ \ \ \  \ \ \ \ \leq
(|b(g(0),T|)^{\frac p 4} e^{\frac {(4-p)T}{4}
 }(\vol(M,g(0))^{\frac {(4-p)}{4} } |T-S|^{\frac {(4-p)}{4} }  \to 0
\ \mbox { as } S \upto T \label{no4} 
\end{eqnarray}
for all $0< p < 4$, where 
\begin{eqnarray}
 b(g(0),t) && := 50e^{50t}\int_M  |\Rc|^2(\cdot,0)
 d\mu_{g(0)}   + 128 \pi^2 \chi (  e^{50 t} -1 ) 
\label{thebs}
\end{eqnarray}
and $\chi $ is
the Euler characteristic.
\end{theo}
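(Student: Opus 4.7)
The plan is to apply Theorem \ref{newfest} to the scalar quantity
$F(t) := \int_M f\, d\mu_{g(t)}$, where $f = |\Rc|^2/(\Sc+2)$, and extract all four estimates by bookkeeping. The key pointwise observation is that under the basic assumptions we have $1 \leq \Sc + 2 \leq 3$, so $\tfrac{1}{3}|\Rc|^2 \leq f \leq |\Rc|^2$ everywhere, allowing us to pass between estimates for $f$ and for $|\Rc|^2$ at the cost of a factor of at most $3$.

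For \eqref{no1} I discard the non-positive term $-\int_M f^2 d\mu_g$ in Theorem \ref{newfest}, obtaining the linear differential inequality $F'(t) \leq 128\pi^2\chi + 50 F(t)$. The integrating factor $e^{-50t}$ converts this into $\ddt(e^{-50t} F(t)) \leq 128\pi^2\chi\, e^{-50t}$, and integration over $[0,t]$ yields $F(t) \leq e^{50t} F(0) + \tfrac{128}{50}\pi^2\chi(e^{50t}-1)$. Multiplying by $3$ and using $F(0) \leq \int_M |\Rc|^2(\cdot,0) d\mu_{g(0)}$ gives \eqref{no1}, with the generous constants in $b(g(0),t)$ providing ample slack. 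Estimate \eqref{no2} then follows from the Chern--Gauss--Bonnet identity $\int_M(|\Riem|^2 - 4|\Rc|^2 + \Sc^2)d\mu_g = 32\pi^2\chi$ rearranged as $\int_M |\Riem|^2 d\mu_g \leq 32\pi^2\chi + 4\int_M |\Rc|^2 d\mu_g$, together with the non-negativity of $\int_M \Sc^2 d\mu_g$, and then plugging in \eqref{no1}.

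For \eqref{no3} I keep the $-f^2$ term previously discarded: Theorem \ref{newfest} reads $\int_M f^2 d\mu_g \leq 128\pi^2 \chi + 50 F(t) - F'(t)$. Integrating over $[0,t]$ and dropping the non-positive $-F(t)$ contribution gives
\[
\int_0^t\!\int_M f^2\, d\mu_{g(s)}\, ds \;\leq\; 128\pi^2\chi\, t + 50 \int_0^t F(s)\, ds + F(0).
\]
Substituting the exponential bound on $F(s)$ from the previous step and performing the $s$-integration produces a fortuitous cancellation (the explicit $128\pi^2\chi\, t$ is absorbed by a matching $-128\pi^2\chi\, t$ arising from the integration), leaving a clean expression of the form $F(0)e^{50t} + \tfrac{128}{50}\pi^2\chi(e^{50t}-1)$. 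Since $|\Rc|^4 \leq 9 f^2$, this yields \eqref{no3} with the constants absorbed by $b(g(0),t)$.

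For \eqref{no4} with $0 < p < 4$, I apply H\"older's inequality in space and time simultaneously:
\[
\int_S^T\!\int_M |\Rc|^p\, d\mu_{g(t)}\, dt \;\leq\; \left(\int_S^T\!\int_M |\Rc|^4\, d\mu_{g(t)}\, dt\right)^{\!p/4}\!\left(\int_S^T \vol(M,g(t))\, dt\right)^{\!(4-p)/4}.
\]
The first factor is bounded by $b(g(0),T)^{p/4}$ via \eqref{no3}. For the second, the identity $\partt d\mu_g = -\Sc\, d\mu_g$ combined with $|\Sc|\leq 1$ gives $|\ddt \vol(M,g(t))| \leq \vol(M,g(t))$, so $\vol(M,g(t)) \leq e^t \vol(M,g(0))$ and hence $\int_S^T \vol(M,g(t)) dt \leq e^T \vol(M,g(0))(T-S)$. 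Combining produces \eqref{no4} exactly as stated; the factor $(T-S)^{(4-p)/4}$ forces the right side to $0$ as $S \upto T$. No genuinely new idea is required beyond Theorem \ref{newfest}, Gronwall, Chern--Gauss--Bonnet, and H\"older; the only mild obstacle is verifying that the numerous constants collapse into the compact form of $b(g(0),t)$, which is why \eqref{no3} has the precise factor $128\pi^2\chi$ rather than an artefact of the Gronwall integration.
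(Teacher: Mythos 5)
Your overall route is the same as the paper's: feed Theorem \ref{newfest} into the integrating factor $e^{-50t}$, use $1\le \Sc+2\le 3$ to pass between $f$ and $|\Rc|^2$, then Gauss--Bonnet for \eqref{no2} and H\"older plus the volume evolution for \eqref{no4}; those last two steps are fine as you wrote them. (The paper extracts \eqref{no1} and \eqref{no3} simultaneously from a single integration of $\dds\big(e^{-50s}\int_M f\,d\mu_{g(s)}\big) \le 128\pi^2\chi e^{-50s} - e^{-50s}\int_M f^2\, d\mu_{g(s)}$, using $e^{-50t}\le e^{-50s}$ to pull $e^{-50t}$ outside the time integral, rather than your two-pass Gronwall-then-reinsert computation; that difference is cosmetic.)

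The one genuine gap is the claim that ``the generous constants in $b(g(0),t)$ provide ample slack'' after you multiply by $3$ (for \eqref{no1}) or $9$ (for \eqref{no3}). Your Gronwall step produces the coefficient $\tfrac{3\cdot 128}{50}$ (resp.\ $\tfrac{9\cdot128}{50}$) in front of $\pi^2\chi(e^{50t}-1)$, and the inequality $\tfrac{3\cdot128}{50}\pi^2\chi(e^{50t}-1)\le 128\pi^2\chi(e^{50t}-1)$ is \emph{false} when $\chi<0$: shrinking the coefficient of a negative quantity makes it larger, so the slack runs the wrong way, and nothing in the hypotheses forces $\chi\ge 0$. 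The paper sidesteps this by never multiplying the right-hand side: it bounds the left-hand side from below via $f\ge \tfrac1{50}|\Rc|^2$ and $f^2\ge\tfrac1{50}|\Rc|^4$, keeps the right-hand side $\int_M f(\cdot,0)\,d\mu_{g(0)}+\tfrac{128}{50}\pi^2\chi(1-e^{-50t})$ intact, and then multiplies the whole inequality by $50e^{50t}$, so the $\chi$-term comes out as exactly $128\pi^2\chi(e^{50t}-1)$ with no inequality applied to it. Your argument can be repaired the same way (or by noting from Gauss--Bonnet that $-\pi^2\chi\le\tfrac18\int_M|\Rc|^2(\cdot,0)\,d\mu_{g(0)}$, so the deficit on the $\chi$-term can be absorbed into the $50e^{50t}\int_M|\Rc|^2(\cdot,0)\,d\mu_{g(0)}$ term), but as written the stated constants do not follow when $\chi<0$.
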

\begin{remark}
In the above inequalities, we may estimate $b(g(0),t)$ and
$|b(g(0),t)|$ by 
$b(g(0),t) \leq |b(g(0,t)| \leq c(g(0),T) := 
50e^{50T}\int_M  |\Rc|^2(\cdot,0)
 d\mu_{g(0)}   + 128 \pi^2 |\chi|  e^{50 T}$
 \end{remark}
\begin{remark}
Note that $b(h,s):= b(\ti h,s)$ if $\ti h = ch$ and  $c>0, s>0$ are arbitrary, in view of
the fact that 
$\int_{M} |\Rc(h)|^2 d\mu_{h} = \int_M|\ti \Rc(\ti h)|^2 d\mu_{\ti h}$
in four dimensions, as one readily verifies using the definition of Riemannian
curvature (see for example the definition given in Section 2 of \cite{HaThree}). 
\end{remark}
\begin{remark}
In a recent paper, \cite{BZ}, the authors independently
showed (among other things) that 
$\sup_{t \in [0,T)} \int_M |\Rc|^2(\cdot,t) d\mu_{g(t)} < \infty $ and
\\$\sup_{t \in [0,T)} \int_M |\Rm|^2(\cdot,t) d\mu_{g(t)} < \infty$ if
$(M^4,g(t))_{t\in [0,T)}$ is a solution to Ricci flow satisfying the  basic assumptions.
Their method uses estimates on the heat kernel, which are also proved in
their paper, and their method is different from the method presented here.
\end{remark}

\begin{proof}

From Theorem \eqref{newfest}  above we have
\begin{eqnarray*}
\ddt \int_M  f d\mu_g &&\leq 128 \pi^2 \chi + \int_M ( -f^2 +   50 f)
\ \ d\mu_g
\end{eqnarray*}
and hence
\begin{eqnarray}
\dds \Big(e^{-50s} \int_M  f(\cdot,s)  \ \ d\mu_{g(s)}\Big) 
&&\leq e^{-50s}128 \pi^2 \chi  -e^{-50s}\int_M  f^2  \ \ 
d\mu_{g(s)} \label{intest1}
\end{eqnarray}

Integrating both sides of this inequality  in time from $0$ to $t<T$, we get
\begin{eqnarray}
&& e^{-50t} \int_0^t \int_M  f^2 d\mu_{g(s)} ds + e^{-50t}\int_M  f d\mu_{g(t)} \cr
 &&  \ \ \leq   \int_M  f(\cdot,0) d\mu_g(0)   + \frac{128}{50} \pi^2 \chi
 (1- e^{-50t}). 
 \label{defnhata} 
\end{eqnarray}
Using the definition of $f$ and the fact that $1\leq (\Sc + 2) \leq 3$ we see that
\begin{eqnarray} 
\frac { |\Rc|^2 }{3} \leq f = \frac{|\Rc|^2}{\Sc + 2} \leq  |\Rc|^2 \label{riccitof}
\end{eqnarray}
and hence,  using this in \eqref{defnhata}, we see that
\begin{eqnarray}
&& \frac{e^{-50t}}{50}  \int_0^t\int_M  |\Rc|^4 d\mu_{g(s)} ds  +
\frac{e^{-50t}}{50} \int_M  |\Rc|^2 d\mu_{g(t)} \cr 
&& \ \ \ \leq e^{-50t}\int_0^t\int_M  f^2 d\mu_{g(s)} ds +
e^{-50t}\int_M  f d\mu_{g(t)}  \cr
 &&  \ \ \leq  \int_M  f(\cdot,0) d\mu_g(0)   + \frac{128}{50} \pi^2
 \chi (1- e^{-50t})\cr
&&\leq \int_M |\Rc|^2(\cdot,0) d\mu_{g(0)} + \frac{128}{50} \pi^2 \chi (1- e^{-50t})
\end{eqnarray}
This gives us the first \eqref{no1} and third \eqref{no3}  estimate.

The second inequality, \eqref{no2}, follows from the first
inequality and  the generalised Gauss-Bonnet Theorem:
\begin{equation}
\int_M |\Riem|^2 \ \ d\mu_g = 32 \pi^2 \chi + \int_M ( 4 |\Rc|^2
- \Sc^2) d\mu_g \leq  32 \pi^2 \chi  + 4 b(g(0),t),
\end{equation}
as required.

The equation for the evolution of the volume is (see Section 3 of \cite{HaThree})
 $\ddt \vol(M,g(t)) = -\int_M \Sc d\mu_{g(t)}$, and hence using
 $|\Sc|\leq 1$ we see that 
$ -\vol(M,g(t)) \leq  \ddt \vol(M,g(t))  \leq \vol(M,g(t))$. 
Integrating this inequality from $0$ to $t$ we see that 
$e^{-T}\vol(M,g(0)) \leq \vol(M,g(t)) \leq e^{T}\vol(M,g(0))$.
Using H\"older's inequality and these  volume bounds we get for $p < 4$ and $S < R < T$
\begin{eqnarray} 
\int_S^R \int_M |\Rc|^p(\cdot,l) d\mu_{g(l)} dl && \leq  \Big(\int_S^R \int_M
|\Rc|^4 d\mu_{g(l)} dl\Big)^{p/4} \Big(\int_{S}^R \int_M d\mu_{g(l)} dl\Big)^{1/q} \cr
&& \leq    \Big(\int_0^T \int_M
|\Rc|^4 d\mu_{g(l)} dl\Big)^{p/4} |S-R|^{\frac 1 q} e^{\frac T
  q}\Big(\vol(M,g(0)\Big)^{\frac 1 q}\cr
&& \leq |b(g(0),T)|^{\frac p 4} e^{\frac T
  q}(\vol(M,g(0))^{\frac 1 q} |S-R|^{\frac 1 q} 
\end{eqnarray}
where $ \frac 1 q = 1- \frac p 4 = \frac {(4-p)}{4}.$
This implies the fourth inequality, \eqref{no4} above.

This completes the proof.
\end{proof}

In four dimensions, $\int_M |\Rc|^2 d\mu_g  $ and $\int_M |\Riem|^2
d\mu_g$ are scale invariant quantities: if $\ti g = cg$, $c>0$, then
$\int_M |\ti \Rc|^2 d\mu_{\ti g} = \int_M |\Rc|^2 d\mu_g  $ and 
$\int_M |\Rm|^2
d\mu_g = \int_M |\ti \Rm|^2 d\mu_{\ti g}  $, as can be readily
verified using the definition of Riemannian curvature, as we mentioned
above.

These facts help us to obtain inequalities for scaled solutions, as
explained below in the proof of the following theorem.

\begin{theo}
Let $(M^4,g(t))_{t \in [0,T)}$ be a  smooth solution to Ricci
flow satisfying the basic assumptions.
Let $\ti g(\cdot,\ti t):= cg(\cdot, \frac{\ti  t}{ c})$ for $0 \leq \ti t \leq
\ti T:= cT$ for any constant $c>0$, and let $0\leq  \ti R < \ti S\leq \ti
T.$
Then 
\begin{eqnarray} 
&&\int_M  |\ti \Rc|^2 (\cdot,\ti t)d\mu_{g(\ti t)}   \leq  b(g(0),t) \label{scaled1}\\
&&\int_M |\ti \Riem|^2( \cdot,\ti t) \ \ d\mu_{g(\ti t)}   \leq 32 \pi^2 \chi  +
4 b(g(0),t) \label{scaled2}\end{eqnarray}
for all $\ti t \in [0,\ti T]$, where $t:= \frac{\ti t}{c}$.
In the case that we additionally assume $c \geq 1$, then we also have
\begin{eqnarray} 
&&\int_{\ti R}^{\ti S} \int_M |\ti \Rc|^4(\cdot,\ti l)  d\mu_{g(\ti l)} d\ti l  \leq 50e^{50\ti L} b(g(0),R) +128  \pi^2 \chi
 (e^{50\ti L}- 1)  \cr
\label{scaled3}
\end{eqnarray}
where $\ti L = \ti S- \ti R$, $b(g(0),t)$ 
is defined above in \eqref{thebs} and $\ti t = ct$, $R:= \frac{\ti R}{c}$, 
$ S = \frac{\ti S}{c}$.
\end{theo}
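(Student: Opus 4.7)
The plan is to reduce \eqref{scaled1} and \eqref{scaled2} to Theorem \ref{integralest} by exploiting scale invariance of the relevant integrals, and to obtain \eqref{scaled3} by first verifying that $\tilde g$ itself satisfies the basic assumptions when $c\geq 1$, and then re-running the integration argument of Theorem \ref{integralest} over the shifted time window $[\tilde R,\tilde S]$ in place of $[0,\tilde t]$.

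For the first two inequalities I would invoke the remark made just before the theorem: in four dimensions both $\int_M |\Rc|^2 \, d\mu_g$ and $\int_M |\Rm|^2 \, d\mu_g$ are scale invariant. Writing $t:=\tilde t/c$, so that $\tilde g(\tilde t)=c\,g(t)$, this gives
\begin{equation*}
\int_M |\tilde \Rc|^2(\cdot,\tilde t)\, d\mu_{\tilde g(\tilde t)} = \int_M |\Rc|^2(\cdot,t)\, d\mu_{g(t)}, \qquad \int_M |\tilde \Rm|^2(\cdot,\tilde t)\, d\mu_{\tilde g(\tilde t)} = \int_M |\Rm|^2(\cdot,t)\, d\mu_{g(t)}.
\end{equation*}
Applying \eqref{no1} and \eqref{no2} of Theorem \ref{integralest} to the unscaled flow at time $t$ then yields \eqref{scaled1} and \eqref{scaled2}. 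No hypothesis on the size of $c$ is needed for this step.

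For \eqref{scaled3} the key observation is that under the extra hypothesis $c\geq 1$ the scaled flow also satisfies the basic assumptions, since $|\tilde \Sc|=|\Sc|/c\leq 1/c\leq 1$. I would therefore apply Theorem \ref{newfest} to $\tilde g$ and rewrite the resulting differential inequality as
\begin{equation*}
\frac{d}{d\tilde s}\Big(e^{-50\tilde s}\int_M \tilde f\, d\mu_{\tilde g(\tilde s)}\Big) \leq e^{-50\tilde s}\,128\pi^2 \chi \;-\; e^{-50\tilde s}\int_M \tilde f^2\, d\mu_{\tilde g(\tilde s)}, \qquad \tilde f:=\frac{|\tilde \Rc|^2}{\tilde \Sc+2}.
\end{equation*}
Integrating from $\tilde R$ to $\tilde S$ (rather than from $0$), dropping the non-negative boundary term at $\tilde S$, bounding $e^{-50\tilde s}\geq e^{-50\tilde S}$ inside the double integral of $\tilde f^2$, and finally multiplying through by $e^{50\tilde S}$, produces
\begin{equation*}
\int_{\tilde R}^{\tilde S}\int_M \tilde f^2\, d\mu_{\tilde g(\tilde s)}\, d\tilde s \leq e^{50\tilde L} \int_M \tilde f(\cdot,\tilde R)\, d\mu_{\tilde g(\tilde R)} + \tfrac{128}{50}\pi^2\chi\,(e^{50\tilde L}-1).
\end{equation*}

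To close the argument I would use the consequence of the basic assumptions for $\tilde g$ that $1\leq \tilde \Sc+2\leq 3$, giving $\tilde f\leq |\tilde \Rc|^2$ and (loosely, to match the format of Theorem \ref{integralest}) $\tilde f^2\geq |\tilde \Rc|^4/50$. The first estimate combined with \eqref{scaled1} at time $\tilde R$ gives $\int_M \tilde f(\cdot,\tilde R)\, d\mu_{\tilde g(\tilde R)} \leq \int_M |\tilde \Rc|^2(\cdot,\tilde R)\, d\mu_{\tilde g(\tilde R)} \leq b(g(0),R)$, and the second applied on the left of the displayed inequality above, followed by multiplication by $50$, yields \eqref{scaled3} with the stated constants. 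The only substantive observation in the argument is that $c\geq 1$ is precisely what is required for the basic assumptions to transfer from $g$ to $\tilde g$ (and hence for Theorem \ref{newfest} to apply to $\tilde g$); everything else is the same exponential-integrating-factor trick as in Theorem \ref{integralest}, simply initiated at $\tilde R$ rather than at $0$.
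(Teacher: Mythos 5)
Your proposal is correct and follows essentially the same route as the paper: scale invariance of the $L^2$ curvature integrals for \eqref{scaled1}--\eqref{scaled2}, and the observation that $c\geq 1$ makes $|\ti \Sc|\leq 1$ so the basic assumptions transfer, combined with \eqref{scaled1} at time $\ti R$ to bound the initial term by $b(g(0),R)$, for \eqref{scaled3}. The only cosmetic difference is that the paper packages the ``start at $\ti R$'' step by applying Theorem \ref{integralest} to the time-shifted solution $h(s):=\ti g(\cdot,\ti R+s)$, whereas you inline the same exponential integrating-factor computation over $[\ti R,\ti S]$.
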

\begin{remark}
Note that $b(g(0),t) $ is {\bf not} equal to    $
b(\ti g(0), \ti t)$ ) in general: for fixed $t \in [0,T)$,  the quantity $ b(\ti g(0), \ti t)
\to \infty$ as $c \to \infty$ for $\ti t:= ct$,  if for example $\chi
>0$.
As we mentioned above, we do have however $b(g(0),t) = b(\ti g(0),t) $
for all $t >0$. 
\end{remark}

\begin{proof}

The first two inequalities follow from the fact that the left hand
side of the inequality is scale invariant (see the explanation given
just before the statement of this theorem). Now we consider the case
that $ c \geq 1$. Define 
$(M,h(t))_{ t\in [0,\ti L:= \ti S-\ti R)}$ to be 
$h(s):= \ti g(\cdot, \ti R+s)$. Then  $(M,h(s))_{ s\in [0,\ti L)}$ is a
solution satisfying the basic assumptions, and so we may apply the
results above to obtain
\begin{eqnarray*}
\int_{\ti R}^{\ti S} \int_M |\ti \Rc|^4(\cdot,\ti t) d\mu_{\ti g (\ti t)}
d\ti t 
&& = \int_{0}^{\ti L} \int_M |\Rc|^4(\cdot,t) d\mu_{h(t)} dt \cr
&&\leq b(h(0),\ti L)\cr
&&= 50 e^{50\ti L}
\int_M  |\Rc|^2(\cdot,0) d\mu_h(0)  + 128  \pi^2 \chi
 (e^{50\ti L}-1) \cr
&& = 50 e^{50\ti L}   \int_M |\ti \Rc|^2(\cdot,\ti R) d\mu_{\ti g (\ti R)} +
 128  \pi^2 \chi
 (e^{50\ti L}- 1) \cr
&& \leq 50 e^{50\ti L} b(g(0),R) +128  \pi^2 \chi
 (e^{50\ti L}- 1) 
\end{eqnarray*}
where the last  inequality here follows in view of the first (scale invariant)
inequality  \eqref{scaled1}.
This finishes the proof.
\end{proof}

The following  corollaries are obtained using the above integral
estimates.

\begin{coro}\label{gradientestimate}
Let $(M^4,g(t))_{t \in [0,T)}$ be a  smooth solution to Ricci
flow on a closed four manifold $M$ satisfying the basic estimates with
$T< \infty$.
Then \begin{eqnarray}
 && \int_{0}^T \int_M |\grad \Rc|^2 d \mu_{g(t)} dt \cr
&& \ \ \ \leq B(g(0),T)\cr
&& \ \ \ := 
\int_M |\Rc(\cdot,0)|^2  d \mu_{g(0)}  +b(g(0),T) +2^9
\pi^2\chi T \cr
&& \ \ \ \ \ + 2^6\Big( 
 (e^{50T}-1) \int_M |\Rc(\cdot,0)|^2  d
\mu_{g(0)}   + 128 \pi^2 \chi( \frac{e^{50T}}{50} - \frac{1}{50} - T)\Big)
\label{gradienteq}
\end{eqnarray}
\end{coro}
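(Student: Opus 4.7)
The plan is to extract the spacetime $L^2$-bound on $\nabla \Rc$ directly from the first Ricci evolution equation of \eqref{riccievn}, namely $\partial_t|\Rc|^2 = \Delta |\Rc|^2 - 2|\nabla\Rc|^2 + 4\Rm(\Rc,\Rc)$, combined with $\partial_t d\mu_g = -\Sc\, d\mu_g$. Integrating over the closed manifold $M$ kills the Laplacian term and yields the identity
\begin{equation*}
\frac{d}{dt}\int_M |\Rc|^2 d\mu_g = \int_M \bigl(-2|\nabla \Rc|^2 + 4\Rm(\Rc,\Rc) - |\Rc|^2 \Sc\bigr) d\mu_g.
\end{equation*}
Rearranging and integrating from $0$ to $T$ gives the exact expression
\begin{equation*}
2\int_0^T\!\!\int_M |\nabla \Rc|^2 d\mu_g dt = \int_M |\Rc|^2(\cdot,0) d\mu_{g(0)} - \int_M|\Rc|^2(\cdot,T) d\mu_{g(T)} + \int_0^T\!\!\int_M \bigl(4\Rm(\Rc,\Rc) - |\Rc|^2\Sc\bigr) d\mu_g dt,
\end{equation*}
and the $-\int_M|\Rc|^2(\cdot,T) d\mu_{g(T)}$ term is simply discarded since it is non-positive.

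Next I would bound the two remaining terms using only tools already in hand. For the curvature cross term, Young's inequality gives $4\Rm(\Rc,\Rc) \leq 4|\Rm||\Rc|^2 \leq 4|\Rm|^2 + |\Rc|^4$, and for the scalar curvature term, the basic assumption $|\Sc|\leq 1$ gives $-|\Rc|^2 \Sc \leq |\Rc|^2$. This reduces the problem to estimating the three spacetime integrals $\int_0^T\!\int_M |\Rm|^2$, $\int_0^T\!\int_M |\Rc|^4$ and $\int_0^T\!\int_M |\Rc|^2$, all of which are controlled by Theorem~\ref{integralest}: $\int_M|\Rc|^2(\cdot,t) d\mu_{g(t)}\leq b(g(0),t)$ gives the first and third after a time integration, while \eqref{no3} directly gives $\int_0^T\!\int_M|\Rc|^4 \leq b(g(0),T)$, and \eqref{no2} gives $\int_M|\Rm|^2 \leq 32\pi^2\chi + 4b(g(0),t)$.

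Assembling these ingredients produces a bound of the form
\begin{equation*}
2\int_0^T\!\!\int_M |\nabla \Rc|^2 d\mu_g dt \leq \int_M |\Rc(\cdot,0)|^2 d\mu_{g(0)} + b(g(0),T) + C_1 \pi^2\chi T + C_2 \int_0^T b(g(0),t) dt,
\end{equation*}
and the stated constants $2^9$ and $2^6$ in $B(g(0),T)$ follow after evaluating $\int_0^T b(g(0),t)\,dt$ explicitly using \eqref{thebs}, which is an elementary antiderivative of $50e^{50t}\int_M |\Rc(\cdot,0)|^2 + 128\pi^2\chi(e^{50t}-1)$, giving exactly the closed form $(e^{50T}-1)\int_M|\Rc(\cdot,0)|^2 d\mu_{g(0)} + 128 \pi^2 \chi(e^{50T}/50 - 1/50 - T)$ that appears in the statement of $B(g(0),T)$.

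There is no real obstacle here; the statement is essentially an integrated form of the Ricci evolution equation combined with previously established integral bounds. The only subtlety is constant bookkeeping: the exponent $2^9=512$ for the $\pi^2\chi T$ term comes from the $4|\Rm|^2$ contribution after integrating the Gauss-Bonnet identity piece of $\int_M|\Rm|^2$, and the $2^6=64$ coefficient on $\int_0^T b(g(0),t) dt$ absorbs (with plenty of slack) the contributions from the $4|\Rm|^2$, $|\Rc|^4$ and $|\Rc|^2 \Sc$ terms together. A slightly wasteful choice of Young's inequality constants throughout ensures the final numbers come out as stated.
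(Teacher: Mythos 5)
Your proposal follows essentially the same route as the paper: integrate the evolution equation $\partial_t|\Rc|^2=\Delta|\Rc|^2-2|\grad\Rc|^2+4\Rm(\Rc,\Rc)$ over space--time, apply Young's inequality to split $4\Rm(\Rc,\Rc)$ into $|\Rc|^4$ and $|\Rm|^2$ contributions, and then invoke \eqref{no2} and \eqref{no3} together with the explicit antiderivative of $b(g(0),t)$. You are in fact slightly more careful than the paper, which silently drops the $-\Sc|\Rc|^2$ volume-form term and the factor $2$ in front of $\int_0^T\!\int_M|\grad\Rc|^2$; your accounting of these is correct and the stated constants still come out.
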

\begin{remark}
Note that $B(h,s) = B(ch,s)$ for all $c>0$, for all $s>0$, since this
is true for $b(h,s)$, and $\int_M |\Rc|^2 d\mu_g$ is invariant
under scaling (as we explained above).
\end{remark}
\begin{proof}
As mentioned above, see \eqref{riccievn},
the evolution equation for $|\Rc|^2$ is 
$$\partt |\Rc|^2 = \lap |\Rc|^2 - 2 | \grad \Rc|^2 + 4\Rm(\Rc,\Rc).$$
Integrating this over space  and time from $0$ to $T$  we get
\begin{eqnarray} 
&& \int_{0}^T \int_M |\grad \Rc|^2( \cdot,t) \mu_{g(t)} dt \cr
&& \ \ \  \leq \int_M |\Rc|^2(\cdot,0)  d \mu_{g(0)}  + \int_0^T \int_M 4 |\Rm(\Rc,\Rc) |(\cdot,t)d \mu_{g(t)} dt \cr
&& \ \ \ \leq \int_M |\Rc|^2(\cdot,0) d \mu_{g(0)}  + \int_0^T \int_M |\Rc|^4(\cdot,t) d
\mu_{g(t)} dt  \cr
&& \ \ \ \ \
+ \int_0^T \int_M 16 |\Rm|^2(\cdot,t) d \mu_{g(t)} dt 
\end{eqnarray}
Now we use the  inequalities  \eqref{no2} and \eqref{no3} to get
\begin{eqnarray} 
&& \int_{0}^T \int_M |\grad \Rc|^2 d \mu_{g(t)} dt \cr
&& \leq  \int_M |\Rc(\cdot,0)|^2  d \mu_{g(0)} + b(g(0),T) + 16 \int_0^T( 32
\pi^2 \chi +4 b(g(0),t)) dt\cr
&& = \int_M |\Rc(\cdot,0)|^2  d \mu_{g(0)}  +b(g(0),T) +16 \cdot 32
\pi^2\chi T + 16\cdot 4 \cdot \int_0^T b_t dt \cr
&& = \int_M |\Rc(\cdot,0)|^2  d \mu_{g(0)}  +b(g(0),T) +2^9
\pi^2\chi T \cr
&& \ \ + 2^6\Big( 
 (e^{50T}-1) \int_M |\Rc(\cdot,0)|^2  d
\mu_{g(0)}   + 128 \pi^2 \chi( \frac{e^{50T}}{50} - \frac{1}{50} - T)
\Big)\cr
&& =: B(g(0),T) 
\end{eqnarray}
as required
\end{proof}

\begin{coro}
Let $(M^4,g(t))_{t \in [0,T)}$ be a  smooth solution to Ricci
flow on a closed four manifold $M$ with
$T< \infty$.
Let $K:= \sup_{M \times [0,T)} |\Sc| < \infty$ and assume $K \geq 1$.
Then we have the following estimates:
\begin{eqnarray} 
&& \int_M  |\Rc|^2 (\cdot,t)d\mu_{g(t)} \leq  b(g(0),Kt)  \ \ \forall
\ t \in [0,T) \label{no1g} \\
&& \int_M |\Riem|^2( \cdot,t) \ \ d\mu_{g(t)} \leq 32 \pi^2 \chi  +
4b(g(0),Kt) \cr 
\ \ \forall
\ t \in [0,T)  \label{no2g} \\
&& \int_{0}^T \int_M |\Rc|^4(\cdot,t) d\mu_{g(t)} dt\leq Kb(g(0),KT)  \label{no3g}\\
&&\int_{0}^T \int_M |\grad \Rc|^2 d \mu_{g(t)} dt \leq   K^3B(g(0),KT) \label{gradientg}
\end{eqnarray}
where  $b(g(0),s), B(g(0),s)$ are as defined in
\eqref{thebs} and \eqref{gradienteq}
($s \in [0,\infty)$ is arbitrary in the definition).

\end{coro}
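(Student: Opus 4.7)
The natural strategy is a reduction to the basic assumptions by parabolic rescaling. Set $c:=K\ge 1$ and define the rescaled flow $\ti g(\cdot,\ti t):=Kg(\cdot,\ti t/K)$ on $[0,\ti T)$ with $\ti T:=KT$; this is a smooth Ricci flow on $M^4$ with $\sup_{M\times[0,\ti T)}|\ti\Sc|\le 1$, so $(M,\ti g(\ti t))_{\ti t\in[0,\ti T)}$ satisfies the basic assumptions. I therefore intend to apply Theorem~\ref{integralest} and Corollary~\ref{gradientestimate} to $\ti g$ and transfer the bounds back using the standard $4$-dimensional scaling identities.

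For \eqref{no1g} and \eqref{no2g} I use that in four dimensions the quantities $\int_M|\Rc|^2\,d\mu_g$ and $\int_M|\Rm|^2\,d\mu_g$ are scale invariant (as recalled just before and after Theorem~\ref{integralest}). At corresponding times $\ti t=Kt$ this gives
\[
\int_M|\Rc|^2(\cdot,t)\,d\mu_{g(t)}=\int_M|\ti\Rc|^2(\cdot,\ti t)\,d\mu_{\ti g(\ti t)}\le b(\ti g(0),Kt)=b(g(0),Kt),
\]
where the last equality is the scale invariance $b(h,s)=b(ch,s)$ from the remark after Theorem~\ref{integralest}. The analogous computation with $|\Rm|^2$ and the inequality \eqref{no2} for $\ti g$ yields \eqref{no2g}.

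For the space--time integrals \eqref{no3g} and \eqref{gradientg} I will keep careful track of the scaling: under $\ti g=cg$ one has $|\ti\Rc|^4\,d\mu_{\ti g}=c^{-2}\cdot c^2|\Rc|^4\,d\mu_g=|\Rc|^4\,d\mu_g$, while the time change $\ti t=ct$ gives $d\ti t=c\,dt$. Hence
\[
\int_0^{\ti T}\!\!\int_M|\ti\Rc|^4\,d\mu_{\ti g}\,d\ti t=c\int_0^T\!\!\int_M|\Rc|^4\,d\mu_{g(t)}\,dt,
\]
and Theorem~\ref{integralest} applied to $\ti g$ together with $b(\ti g(0),\ti T)=b(g(0),KT)$ bounds the left side by $b(g(0),KT)$, giving $\int_0^T\!\int_M|\Rc|^4\,d\mu_g\,dt\le K^{-1}b(g(0),KT)\le Kb(g(0),KT)$ since $K\ge1$. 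The same scaling analysis applied to $|\grad\Rc|^2$ (which carries one additional derivative, hence one extra factor of $c^{-1}$) produces $\int_0^{\ti T}\!\int_M|\ti\grad\ti\Rc|^2\,d\mu_{\ti g}\,d\ti t=c\int_0^T\!\int_M|\grad\Rc|^2\,d\mu_g\,dt$, so Corollary~\ref{gradientestimate} applied to $\ti g$, combined with $B(\ti g(0),\ti T)=B(g(0),KT)$ from the remark there, yields $\int_0^T\!\int_M|\grad\Rc|^2\,d\mu_g\,dt\le K^{-1}B(g(0),KT)\le K^3 B(g(0),KT)$.

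The only real bookkeeping obstacle is keeping the scaling exponents straight for each tensor and volume factor and verifying that $b$ and $B$ depend only on the time argument once one fixes scale invariant integrals of $\Rc$ at the initial slice. With $K\ge 1$ all inequalities obtained are in fact slightly stronger than the ones stated in the corollary, so no delicate sharpness is required; the proof is essentially a scaling exercise on top of Theorem~\ref{integralest} and Corollary~\ref{gradientestimate}.
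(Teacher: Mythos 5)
Your overall strategy---parabolic rescaling $\ti g(\cdot,\ti t)=Kg(\cdot,\ti t/K)$ on $[0,\ti T)$ with $\ti T=KT$, applying Theorem \ref{integralest} and Corollary \ref{gradientestimate} to the rescaled flow, and transferring the bounds back via the four--dimensional scaling identities together with $b(h,s)=b(ch,s)$ and $B(h,s)=B(ch,s)$---is exactly the paper's proof, and your treatment of \eqref{no1g} and \eqref{no2g} via the scale invariance of $\int_M|\Rc|^2\,d\mu_g$ and $\int_M|\Rm|^2\,d\mu_g$ is correct.

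However, your scaling computation for the two space--time integrals is wrong, and in a way that reverses the direction of the factor of $K$. Since $|\ti\Rc|^2=K^{-2}|\Rc|^2$ pointwise, one has $|\ti\Rc|^4=K^{-4}|\Rc|^4$, not $K^{-2}|\Rc|^4$; hence $|\ti\Rc|^4\,d\mu_{\ti g}=K^{-4}\cdot K^{2}\,|\Rc|^4\,d\mu_g=K^{-2}|\Rc|^4\,d\mu_g$, and with $d\ti t=K\,dt$ this gives $\int_0^{\ti T}\int_M|\ti\Rc|^4\,d\mu_{\ti g}\,d\ti t=K^{-1}\int_0^T\int_M|\Rc|^4\,d\mu_{g(t)}\,dt$ --- the reciprocal of the identity you wrote. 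The correct transfer is therefore $\int_0^T\int_M|\Rc|^4\,d\mu_{g(t)}\,dt=K\int_0^{\ti T}\int_M|\ti\Rc|^4\,d\mu_{\ti g}\,d\ti t\leq K\,b(\ti g(0),\ti T)=K\,b(g(0),KT)$: the factor $K$ in \eqref{no3g} is exactly what the scaling produces, so your intermediate bound $K^{-1}b(g(0),KT)$ and your closing remark that all the inequalities you obtain are ``slightly stronger'' than those stated are not established. Likewise $|\ti\grad\ti\Rc|^2\,d\mu_{\ti g}\,d\ti t=K^{-3}\cdot K^{2}\cdot K\,|\grad\Rc|^2\,d\mu_g\,dt=|\grad\Rc|^2\,d\mu_g\,dt$ is scale invariant (not off by a factor of $c$ as you claim), so the correct chain for \eqref{gradientg} is $\int_0^T\int_M|\grad\Rc|^2\,d\mu_{g(t)}\,dt\leq B(g(0),KT)\leq K^3B(g(0),KT)$, the last step using $B\geq 0$ and $K\geq 1$. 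With these exponents corrected, your argument coincides with the paper's.
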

\begin{remark} If $K \leq 1$, then we may estimate the integrals
  involved using Theorem \ref{integralest}
and Corollary \ref{gradientestimate}
\end{remark}

\begin{proof}
Set $\ti g(\cdot,\ti t):= K g(\cdot, \frac{\ti t}{K})$
for $\ti t \in [0,T K =: \ti T]$. The result now follows from  Theorem
\ref{integralest} and Corollary  \ref{gradientestimate} applied to the solution  $\ti g(\cdot,\ti t)_{t \in
  [0,\ti T)}$ in view of the fact that $\ti T =  T K $ and the identities
$\int_M  |\Rc|^2(\cdot,t) d\mu_{g(t)} 
= \int_M  |\ti \Rc|^2(\cdot,\ti t) d\mu_{\ti g(\ti t)} $, 
$\int_M  |\Rm|^2(\cdot,t) d\mu_{g(t)} 
= \int_M  |\ti \Rm|^2(\cdot,\ti t) d\mu_{\ti g(\ti t)} $, \\
$ \int_0^T \int_M |\Rc|^4 d\mu_{g(t)} dt = K \int_0^{\ti T} \int_M
|\ti \Rc|^4 d\mu_{\ti g(\ti t) } d\ti t$, \\ $ \int_0^T \int_M |\grad \Rc|^4 d\mu_{g(t)} dt = K^3 \int_0^{\ti T} \int_M
|\ti \grad \ti \Rc|^4 d\mu_{\ti g(\ti t) } d\ti t$,
which all follow from the scaling,
that is $\ti g(\cdot,\ti t) = K g(\cdot,\frac{\ti t}{K})$, and the
fact that $B(g(0),s) = B(\ti g(0),s)$, $b(g(0),s) = b(\ti g(0),s)$ for
all $s>0$, as we mentioned above.

\end{proof}
\begin{appendix}
\section{Notes on the Euler characteristic}\label{gaussbonnet}
In the following, we assume that $(M^n,g)$ is an oriented smooth compact
Riemannian manifold
without boundary (unless otherwise stated).
The Pfaffian is a second order polynomial obtained using the curvature
operator: see section 3 in \cite{BG}, for example, for a
definition. In the case that the Riemannian manifold $(M^4,g)$ we
consider has dimension four, the Pfaffian $\Pf$ may be written as 
$\Pf = c(|\Riem|^2 -4 |\Rc|^2 + \Sc^2 )\vol_g $ where $\vol_g$ is the
Riemannian volume form. This may be seen by using the Formula 4.1
and Corollary 4.1 in \cite{BG}  ( using the orthonormal basis
$X_1,X_2,X_3,X_4$ given at $p \in M$ coming from Corollary 4.1 in
\cite{BG} we can calculate $(|\Riem|^2 -4 |\Rc|^2
+ \Sc^2) \vol_g$  and we see that it has the same value ( up to a constant)
of $\Pf$ at $p$ given in the Formula 4.1 of \cite{BG} ).
It is known (\cite{AW}, \cite{Chern}), 
that the generalised Gauss-Bonnet formula $c(n) \int_M \Pf = 
\euler(M)$ holds: for an intrinsic explanation using  modern day
terminology, see \cite{Br}. One may choose the generic vector field
$Y$ occurring in the explanation of Bryant to be $\grad f$, where $f:
M \to \R$ is a morse-function (see section 6 in \cite{Mi} to see that
such functions exist). The Euler characteristic is 
$\euler(M):= c_0 -c_1 + c_2-c_3 + \ldots +(-1)^{n}c_n = b_0
-b_1+b_2-b_3 + \ldots + (-1)^n b_n$ , where here, $b_i$ is the $i$th
Betti-number, and $c_i$ is the number of critical values of degree $i$
: see Theorem 5.2 in \cite{Mi}.
So we have $ a \int_M  (|\Riem|^2 -4 |\Rc|^2 + \Sc^2 ) \vol_g=
\euler(M)$ for some constant $a$. To see that the constant $a$ in this
formula is $\frac{1}{32 \pi^2}$,
calculate  the left and right hand side of this formula in the case that $(M,g)$ is
the standard sphere with sectional curvature equal to one everywhere.
\end{appendix}


\begin{thebibliography}{10}




\bibitem[AW]{AW} Allendoerfer, C. , Weil, A. 
\emph{ The Gauss-Bonnet theorem for Riemannian polyhedra}, 
Trans. Amer. Math. Soc., 53 (1943), 101-129.
















\bibitem[BG]{BG} Bishop, R.L., Goldberg, S. I.
{\it Some implications of the generalized Gauss-Bonnet}

\bibitem[BZ]{BZ} Bamler, R., Zhang, Q., {\it Heat kernel and curvature
    bounds in Ricci flows with bounded scalar curvature}
 arXiv:1501.01291 (2015)

\bibitem[Br]{Br} Bryant, R. {\it A question on Generalized
    Gauss-Bonnet Theorem}, 
mathoverflow.net/questions/84521/a-question-on-generalized-gauss-bonnet-theorem



\bibitem[CaoX]{CaoX} Cao, X.-, \emph{Curvature Pinching Estimate and Singularities of the Ricci Flow},
Comm. Anal. Geom., Vol. 19 (5): 975-990, (2011).










\bibitem[ChowI]{ChowI}  Chow, B.,
\emph{The Ricci  flow on the two sphere},
 J. Differential Geom.  Volume 33, Number 2 (1991), 325-334.
















\bibitem[Chern]{Chern}  Chern, S-S., \emph{A Simple Intrinsic Proof of the
    Gauss-Bonnet Formula for Closed Riemannian Manifolds}, 
Annals of Mathematics, Second Series, Vol. 45, No. 4 (1944), pp. 747-752















\bibitem[HaThree]{HaThree}
Hamilton,R.S.,
\emph{Three-manifolds with positive Ricci curvature},
Journal of Differential Geometry 17 (2):255-306(1982)

\bibitem[HaSurface]{HaSurface} Hamilton, R.S., 
\emph{ Ricci flow on surfaces}, Mathematics and general relativity,
Cont. Math. 71 (1988) 237-261.








\bibitem[Knopf]{Knopf} Knopf, D., \emph{Estimating the trace-free
    Ricci tensor in Ricci flow},
 Proc. Amer. Math. Soc. 137 (2009), no. 9, 3099-3103.

\bibitem[Lee]{Lee} Lee, J., \emph{Introduction to smooth manifolds},
  Graduate texts in Math., Springer (2002) 





\bibitem[Mi]{Mi}, Milnor, J., \emph{Morse Theory},  Princeton
  University Press, (1963)

\bibitem[Si]{Si} Simon, M. \emph{Extending four dimensional Ricci flows with
  bounded scalar curvature}, 
Arxiv Preprint

\bibitem[ST]{ST} Sesum, N. , Tian, G. \emph{Bounding
    scalar curvature and diameter along the Kaehler Ricci flow (after
    Perelman)}, Journal of the Inst. of Mathematics of Jussieu,
  Vol. 7, 03, (2008) pp. 575-587


\bibitem[TZ]{TZ} Tian, G., Zheng, Z. \emph{Regularity of
    Kaehler Ricci flows on manifolds}, arXiv:1310.5897 

\end{thebibliography}
\end{document}